\newcommand{\Z}{\mathbb Z}
\newcommand{\T}{^{\mathsf{T}}}
\newcommand{\C}{\mathbb C}
\newcommand{\R}{\mathbb R}
\newcommand{\bff}{\mathbf f}
\newcommand{\bfg}{\mathbf g}
\newcommand{\F}{\mathcal F}
\newcommand{\bF}{\mathbf F}
\newcommand{\bD}{\mathbf D}
\newcommand{\rmd}{\mathrm{d}}
\newcommand{\bvx}{\vec{\mathbf{x}}}
\newcommand{\vxt}{\vec{\xi_\theta}}
\newtheorem{theorem}{Theorem}
\newtheorem{lemma}{Lemma}
\newtheorem{proposition}{Proposition}
\newtheorem{definition}{Definition}
\title{Fourier Analysis, Computing, and Image Formation for Spotlight Synthetic Aperture Radar}
\author[1]{Toby Sanders}
\author[2]{Christian Dwyer}
\author[1]{Rodrigo B. Platte}
\affil[1]{School of Mathematical and Statistical Sciences, Arizona State University}
\affil[2]{Department of Physics, Arizona State University}
\date{}
\begin{document}
\maketitle
\begin{abstract}
This article is written to serve as an introduction and survey of imaging with synthetic aperture radar (SAR).  The reader will benefit from having some familiarity with harmonic analysis, electromagnetic radiation, and inverse problems.  After an overview of the SAR problem and some main concepts, the SAR imaging problem is contextualized in terms of classical harmonic analysis.  Within this context, we consider partial Fourier sums of off-centered Fourier data and correspondingly the convolutional kernels resulting from conventional SAR image formation techniques. Following this, we revisit imaging of random complex signals from frequency data as in SAR, providing simpler derivations of some previous results and extending these ideas to the continuous setting.  These concepts are tied in with the derived convolutional kernels, and it is deduced how good an image approximation is when it is obtained from only a small band of high frequency Fourier coefficients.  Finally, regularization methods are presented to improve the quality of SAR images.  Corresponding MATLAB software is made available for reproducibility of most figures and to facilitate further exploration of the methods presented here.
\end{abstract}


\section{Introduction and Basic Setup for SAR}

Synthetic aperture radar (SAR) is an all weather, day or night radar technique used for imaging targets in defense, security, geodesy, remote sensing, and other applications.  The imaging procedure requires solving an inverse problem that models the reflected radar data from the target scene \cite{Andersson, thompson1996spotlight}.

The data in spotlight SAR is collected by using an antenna to transmit microwaves into the imaging scene and measuring the reflected \emph{echo} response with a receiver.  This process is repeated by passing around the scene at a series of locations that may be specified by an azimuth angle $\theta$ and the elevation angle relative to the ground plane, $\phi$.  For simplicity  we assume a single antenna and receiver system attached to an aircraft passing around the scene.

\begin{figure}
 \centering
 \includegraphics[width=.56\textwidth]{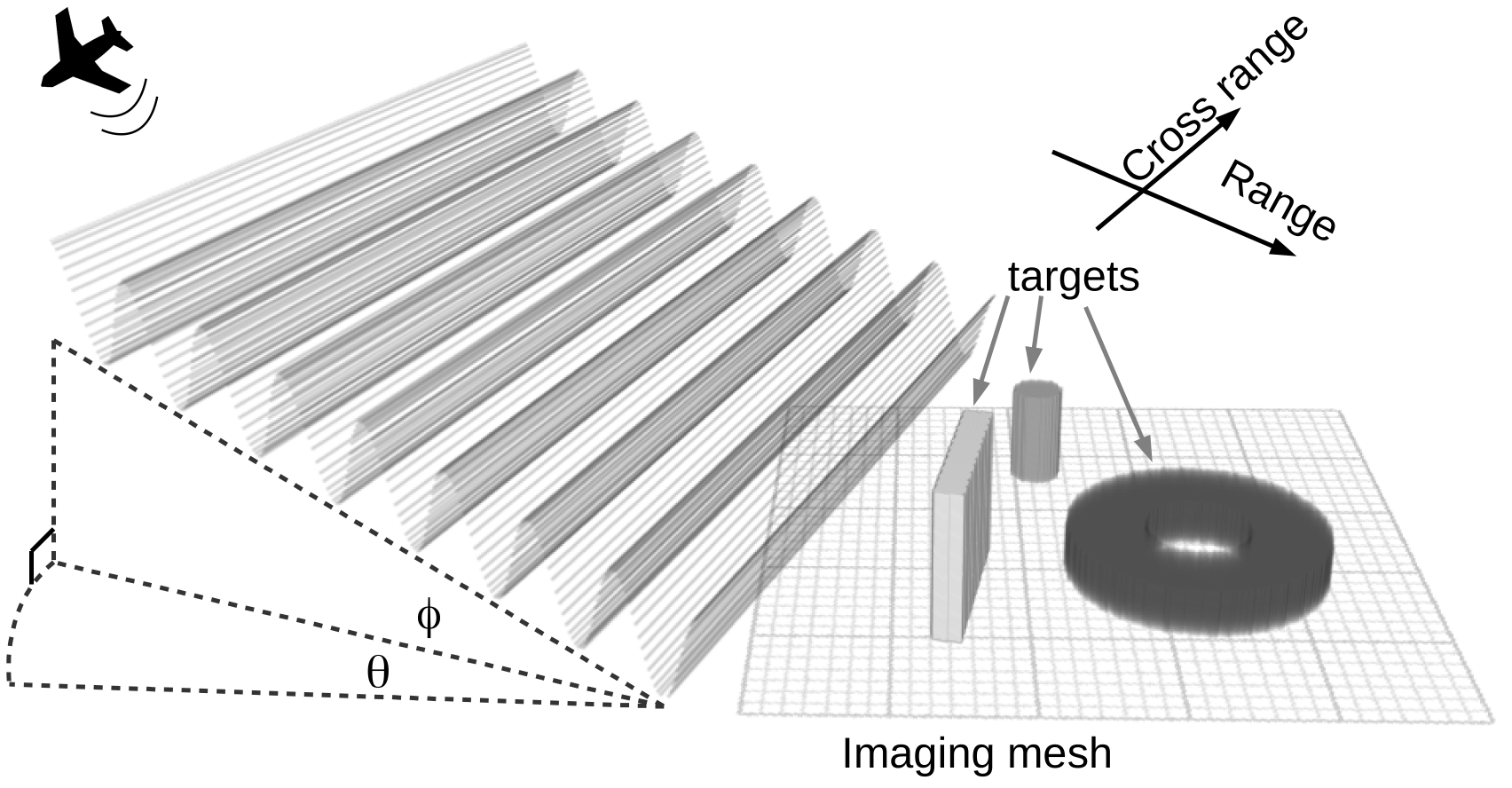}
 \includegraphics[width=.4\textwidth]{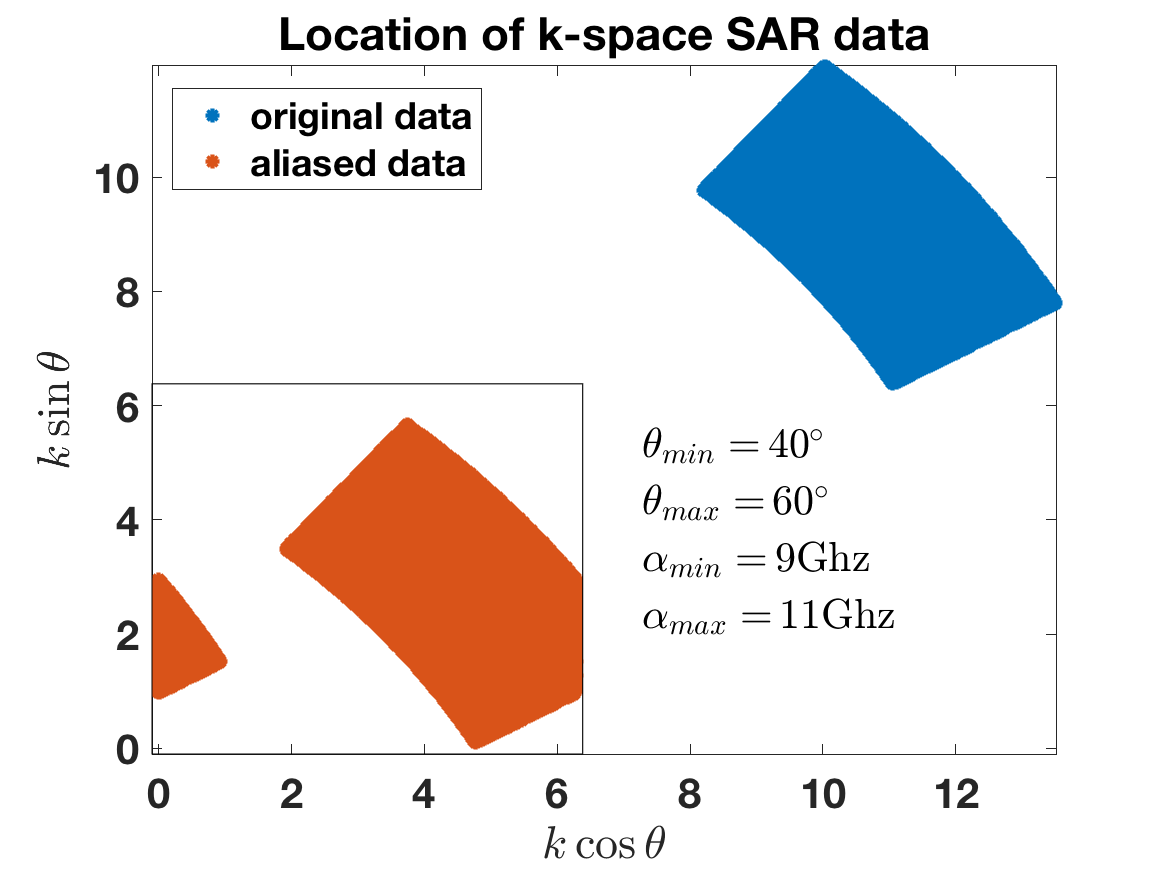}
 \caption{Left: visual of the SAR data acquisition.  Right: location of the SAR data in Fourier space, and the aliased data (orange).}
 \label{fig: 3D}
\end{figure}

To describe the data, we introduce the most commonly used Fourier model for practical SAR imaging.  It is important to note that this approximation model for SAR has received a great deal of attention in the mathematical literature \cite{cheney2001mathematical,nolan2002synthetic,cheney2009fundamentals,gilman2015mathematical}, which may be observed as a result of the first order Born approximation of the general model.  As an alternative to these endeavors, the work here focuses on the numerical imaging and approximation aspects as a result of the given model.  To this end, let the transmitted microwave frequencies be given by $\{\alpha_j\}_{j=1}^M$, e.g. equally spaced frequencies with a center near the 10 GHz frequency.  Let the imaging scene be a field of scattering objects or reflectors, which we denote by $f(x,y)$. Then the data in SAR, from azimuth angle $\theta$ and an angle of elevation $\phi$, is modeled as
\begin{equation}\label{eq1}
\begin{split}
\hat f(k,\theta) 
& = \hat f(k(\alpha_j, \phi),\theta) = \iint_{\Omega} f(\bvx) \exp\left( -ik \, \vxt \cdot \bvx \right) \, \rmd A\\
& = \iint_{\Omega} f(x,y) \exp\left(-ik(\alpha_j,\phi) (\cos \theta, \sin \theta)\cdot (x,y)\right) \, \rmd A 
\end{split}
\end{equation}

The notation used here and continued through the document is $\vxt = (\cos\theta , \sin\theta)$ and $\bvx = (x,y)$.  The Fourier frequencies $k = k(\alpha_j , \phi)$ are determined by the microwave frequencies $\alpha_j$ (typically measured in GHz) and the angle of elevation $\phi$.  Then the general image reconstruction problem in SAR is to approximate $f(x,y)$ from the data set $\{ \hat f(k_j,\theta_i)\}_{j,i=1}^{M,P}$.  The frequencies are typically equally spaced as well as the azimuth angles.  For simplicity, we will assume a constant angle of elevation.  This set up is depicted on the left of Figure \ref{fig: 3D}.  The right of Figure \ref{fig: 3D} shows the nodes on a polar grid in Fourier space on which the data is located, and it also shows the aliasing of the nodes (orange).  This aliasing is described in section \ref{sec: alias}.

We proceed by reformulating the data within the tomographic framework, which will be useful for some of the work here.  To do so, simply make the substitution $(w,z) = (x,y) Q_\theta$, where $Q_\theta$ denotes the standard rotation matrix by angle $\theta$.  Then if we let $\Omega$ be the ball of radius $R$ centered at the origin, (\ref{eq1}) becomes
\begin{equation}\label{tomo}
\hat f (k , \theta ) =  \int_{-R}^R \int_{-\sqrt{R^2 - w^2}}^{\sqrt{R^2 - w^2 }} f((w,z)Q_\theta^T) \, dz \exp \big\{-ik w \big\} \, dw .
\end{equation}
Those familiar with tomography will notice the inner integral in (\ref{tomo}) over $z$ defines $p_\theta f$, the projection or Radon transform of $f$ at angle $\theta$.  Therefore, (\ref{tomo}) essentially represents the well-known Fourier slice theorem, which is sometimes used to depict SAR using a tomographic formulation \cite{munson1983tomographic,Jakowatz1996,Nat-Cheney}.  We point this out since some of our discussions involving the data processing for SAR will be simplified to 1D Fourier integrals represented by the outer integral over $w$ in (\ref{tomo}), i.e.  
\begin{equation}
\hat f(k,\theta) = \int_{-R}^R p_\theta f ( w) e^{-ik w } \, dw .
\end{equation}
In SAR, these 1D profiles or projections $p_\theta f$ are often referred to as range profiles.

Figure \ref{fig: example} highlights some of these ideas from the openly available CV dome data \cite{dungan2010civilian}, where some of the concepts presented are explained in more detail in the proceeding sections.  The full 360$\degree$ azimuth data was utilized to generate the image, as well as the full available bandwidth of data as shown in the far left image.  The image formation was performed with an approximation of the matched filter using a nonuniform FFT \cite{Greengard, Andersson, 1166689}.  The middle image shows the 1D inverse Fourier transform of the columns of the data shown of the left, which forms the so-called range profiles of $f$, and may be considered high pass filtered projections.

\begin{figure}
 \centering
 \includegraphics[width=1\textwidth]{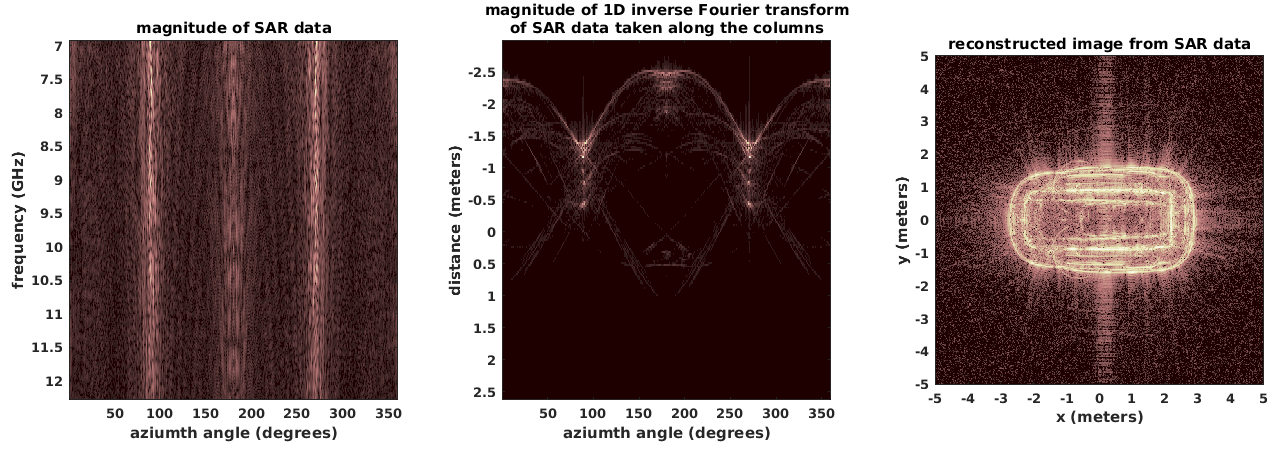}
 \caption{Visualization of synthetic SAR data of a jeep and corresponding reconstruction.}
 \label{fig: example}
\end{figure}

With these main ideas in order, the remainder of this document serves as a survey of many important practical and theoretical aspects of SAR imaging.  In section \ref{sec:freqs}, some fundamental concepts of SAR imaging are provided for background and completeness, including the connection between continuous and discretized domains, aliasing of frequencies, and standard image formation techniques.  In section \ref{sec:filter}, a more theoretical perspective on these standard imaging techniques is analyzed, particularly through an extension of some concepts in classical Fourier analysis and corresponding convolution kernels arising in SAR.  In section \ref{sec:rand} the nature of random phases in SAR images is revisited and extended, and it is shown how this effects both positively and negatively the results with the derived convolutional kernels derived in the previous section.  Finally, we complete this work by outlining some regularization techniques that can improve upon the standard imaging techniques, which are motivated by the results in all previous sections.  Throughout each section many examples and figures are provided to aid in the discussion.  Finally, most of the software for the techniques described here is made available, including reproducible MATLAB code for most figures. Downloading additional open access MATLAB software is necessary for successful implementation these codes in many cases, and can be found in \cite{toby-web,1166689}.

\section{Fundamentals of SAR Imaging}\label{sec:freqs}

Formally, unprocessed SAR measurements are reflections that result from mixing or convolving the scene with linear chirp waveforms, 
and therefore require simple processing steps for making the reflected signal approximate Fourier coefficients in the form (\ref{eq1}) (see \cite{thompson1996spotlight}, pp. 1--31).  From the raw reflections resulting from convolution of the scene with the linear chirp waveform, information from the instantaneous frequencies are ``pulled apart'' or \emph{demodulated} so long as the linear chirp rate is small enough.  The resulting Fourier frequencies or $k$ values are given by
\begin{equation}\label{k-values}
k = k(\alpha_j , \phi) = 2\pi \cos \phi \frac{2\alpha_j}{c} ,
\end{equation}
where $c$ is the speed of light in a vacuum approximately given by $\sim 3\times 10^8\text{m/s}$.
The factor of $\cos \phi$ comes from an effective \emph{shortening} of the frequency relative to the ground plane at higher elevations.  The remaining factors are standard for converting electromagnetic frequencies to their respective wavelengths or physical frequencies, aside from the factor of two, which is a result of the back scattering time delay to the receiver.  The typical microwaves frequencies $\alpha_j$ in SAR data acquisition are near the 10~GHz band, and therefore by (\ref{k-values}) the measured wavelengths are around $\sim 1.5$~cm, which gives us some idea of the maximum possible resolution (minimum pixel size) in the reconstruction.

\subsection{Scene Size}\label{sec: size}
\begin{figure}
 \centering
 \includegraphics[width=.8\textwidth]{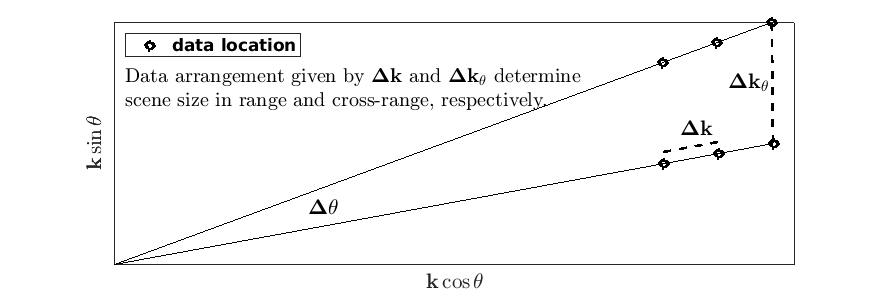}
 \caption{Demonstration of maximum scene size determined by data arrangement.}
 \label{fig: scene-size}
\end{figure}

Here we consider the appropriate scene size for reconstruction based on the SAR data parameters.  For further details on these topics see \cite{cheney2001mathematical,Nat-Cheney,SARMATLAB,cutrona1990synthetic}.  The scene size for the Fourier series depends on $\Delta k $, the difference between the frequencies.  In particular, for a scene of length $2R$ we have $\Delta k = \pi / R$, i.e. $R = \pi / \Delta k$.  Letting $\Delta \alpha$ denote the difference between the microwave frequency values, then by (\ref{k-values}) the maximum alias free scene radius is
\begin{equation}\label{scene-size}
 R = \frac{\pi}{\Delta k}  = \frac{c}{4\Delta \alpha \cos \phi}.
\end{equation}
However, this is only for the range direction parallel to the incident wave (see Figure \ref{fig: 3D}).  For SAR, there is a spacing of the frequencies in the range direction given by $\Delta k$, and spacing in frequencies in the cross-range direction, which is determined by the angle increment $\Delta \theta$.  See Figure \ref{fig: scene-size} for a visualization, where the apparent size of $\Delta \theta$ is exaggerated for visual clarity.  Assuming the spacing to be sufficiently small and using the approximation $\sin \left(\Delta \theta\right)  \approx \Delta \theta$, one obtains the increment in the frequencies in the cross-range or angular direction to be 
$$\Delta k_\theta = k_M \frac{\sin \Delta \theta}{\sin((\pi-\Delta \theta)/2)} \approx k_M  \Delta \theta,
$$
where $k_M$ is the largest frequency given by (\ref{k-values}).  Hence one defines the maximum alias free cross-range radius by
$$
R_{cr} = \frac{\pi}{k_M \Delta \theta} = \frac{c}{4 \alpha_M \cos \phi \Delta \theta}.
$$
Typically the parameters $\Delta \theta$ and $\Delta k$ are chosen to match the scene to be imaged \cite{SARMATLAB}, barring any limiting constraints arising from the hardware or physics.  For further details on the limits of the scene size based on the data geometry, see for example \cite{rigling2005taylor}.

Denote the pixel length by $h = 2R/N$, where $N$ is the number of pixels.  For convenience we can define the dimensionless spatial frequencies
\begin{equation}\label{fund-freq}
 (k_1 , k_2) = \vec{k}(\alpha_j , \phi , \theta, h) =  2\pi h\cos\phi\frac{2\alpha_j}{c} (\cos \theta , \sin \theta).
\end{equation}
Then for $f\in \C^{N\times N}$ on a uniform symmetric mesh, the 2-D discrete Fourier coefficients of frequency $(k_1,k_2)$ are defined by
\begin{equation}\label{2D-DFT}
 \hat f_{k_1 , k_2} = \sum_{j_1, j_2=-N/2}^{N/2-1} f_{j_1 , j_2} e^{-i(k_1,k_2)\cdot (j_1 , j_2)} .
\end{equation}
Hence, given the GHz frequencies $\alpha_j$, the angle(s) of elevation $\phi$, the sampling azimuth angles $\theta$, and a pixel length $h$, the data when interpreted as Fourier coefficients of the DFT in (\ref{2D-DFT}) has frequency values as written in (\ref{fund-freq}).

\subsection{Aliasing of the Frequencies}\label{sec: alias} 
Consider the 1-D Fourier representation of a function $f\in L_2[-R,R]$ (for additional details, see the appendix), as well as its discretized DFT.  The Fourier representation of $f$ on an uniform $N$ point mesh, i.e. $f_j = f\left(x_j \right)$, where $x_j =  2jR/{N} = jh$, for $~ j=-N/2,-N/2+1,\dots, N/2-1$.  Then
\begin{equation}\label{alias-1}
\begin{split}
f_j = \sum_{k\in \Z} \hat f_k \exp\left( {i\tfrac{2 \pi }{2 R} k x_j }\right)
 = \sum_{k\in \Z} \hat f_k \exp\left( {i\tfrac{2\pi }{N} k j }\right)
\end{split} 
\end{equation}
Proceed by rewriting this sum as a double sum by separating it into groups of $N$:
\begin{equation}\label{alias-2}
\begin{split}
f_j 
&= \sum_{m\in \Z } \sum_{k=0}^{N-1} \hat f_{k+mN} \exp\left({i\tfrac{2\pi}{N} (k+mN) j} )\right)\\
& = \sum_{k=0}^{N-1} \sum_{m\in\Z} \hat f_{k+mN} \exp\left(i\tfrac{2\pi}{N} kj\right) \underbrace{\exp\left( i\tfrac{2\pi}{N}mNj )\right) }_1 \\
& = \sum_{k=0}^{N-1} \exp\left(i\tfrac{2\pi}{N} kj\right)  \underbrace{\left[\sum_{m\in\Z} \hat f_{k+mN} \right]}_{\hat F_k} \\
& = \sum_{k=0}^{N-1} \hat F_k \exp\left(i\tfrac{2\pi}{N} kj\right) 
\end{split}
\end{equation}
The interpretation of the last line is that on this uniform grid one only needs $N$ waves for the representation (as one should expect, since the waves form an orthonormal basis for $\C^N$).  Moreover, the coefficients in this representation, $\hat F_k$, are given by an infinite sum of equally spaced coefficients of the continuous $f$.  This is a typical example of aliasing.  

This additionally implies that any frequency, $2\pi k/N$ in (\ref{alias-1}), which exceeds $2\pi$ will be modulated by $2\pi$.  One could mitigate this effect by decreasing $h$ in (\ref{fund-freq}), i.e. increasing $N$, which means we would be able to \emph{see} higher frequencies with more pixels.  In our experience this does not lead to any notable improvement in image quality, assuming $N$ is already sufficiently large.  


\subsection{Standard Image Formation Procedures}\label{sec: imform}
Most standard image formation procedures for SAR involve some numerical technique that approximates an inverse Fourier transform.  Let's assume the data takes the form in (\ref{eq1}), with the values $k_j  = 4\pi h \cos \phi \frac{\alpha_j}{c}$ as in (\ref{fund-freq}) for $\{ \alpha_j \}_{j=1}^M$ and for a set of angles $\Theta = \{ \theta_1, \theta_2, \dots, \theta_P\}$.  Then the so called matched filter reconstruction is given by 
\begin{equation}\label{mf}
 f_{MF}(x,y) =  \sum_{\theta \in \Theta} \sum_{j=1}^M \hat f(k_j, \theta) \exp\left( ik_j \vxt \cdot \bvx \right),
\end{equation}
where in practice $\bvx = (x,y)$ would be defined over the discrete mesh.  Computing this directly is computationally burdensome ($O(MPN^2)$, where $N^2$ is the number of pixels), and therefore most conventional techniques try to approximate (\ref{mf}) in some efficient way.  

The most popular approach attempts to make use of the efficient FFT.  The FFT is in its basic case is limited to Fourier coefficients defined over an equally spaced Cartesian grid, yet the Fourier coefficients in SAR are spaced over a polar grid.  Therefore the general strategy is to \emph{regrid} the data onto an equally spaced Cartesian grid with a well-designed interpolation scheme and then apply an FFT.  These methods include the polar format algorithm and the nonuniform FFT (NUFFT), which may go by other names within the literature.  There is a vast amount of literature devoted to this subject, the details of which go beyond the scope of this article.  Andersson et. al. summarize and compare these methods in \cite{Andersson}.  

Another alternative is the backprojection algorithm, which is also an accelerated algorithm to approximate (\ref{mf}).  This approach makes use of the Fourier slice theorem in (\ref{tomo}), and is typically slower than a NUFFT algorithm.  The appeal however is in the possibility of implementing backprojection on the fly as new data is acquired and simply adding the data to the reconstruction.  Additionally, it is easily parallelizable making it attractive for GPU computations.  Finally, backprojection is also a useful educational tool to begin working in SAR for those unfamiliar. To perform backprojection, first 1-D Fourier transforms are applied to each angle $\theta$ or pulse of data $\{\hat f (k,\theta ) \}_k$ to yield approximate high pass filtered projections of $f$ denoted by $p_\theta f$ (see Theorem \ref{cor1} for a formal definition, and the middle column of Figure \ref{fig: example} for a visual).  This projection $p_\theta f$ is usually referred to as the range profile of $f$ in SAR.  The range profiles are then \emph{backprojected} onto the image mesh using a fast interpolation scheme.  An implementation and description of backprojection are given by Gorham and Moore \cite{SARMATLAB}, along with supporting MATLAB software.

Finally, due to noise and imperfections in the model, finding solutions to approximate (\ref{mf}) generally results in subpar image quality.  The most common approach is then to find a particular \emph{smooth} solution that satisfies the data to a lesser extent.  This can be done using regularization techniques, and if done properly, can be computed nearly as efficiently as say backprojection \cite{sanders2017composite}. General models for regularization are given later in section \ref{sec: reg}.


\section{Convolution Kernels Arising from Conventional SAR Image Formation}\label{sec:filter}
In this section we characterize the function approximation of $f$ resulting from (\ref{mf}).  In particular, we arrive at a kernel $\mathcal K$, such that $f_{MF}(x,y) = f*\mathcal K(x,y)$, where $\mathcal K$ depends on the sampling parameters.  While it may appear that this kernel may only be useful for capturing edges of $f$ from typical SAR sampling parameters, section \ref{sec:rand} will show that this is not the case, and indeed much more information is acquired in the high band frequency data.  The derivation of the kernel $\mathcal K$ is of the flavor of classical Fourier analysis results, and so we begin by first introducing some of these concepts initially for those that may be unfamiliar.  For reference on the proceeding discussion around (\ref{SN})-(\ref{SNsigma}), see for example chapter 15 in \cite{bruckner1997real}.

An important concept in Fourier analysis and imaging is the approximation of a finite or truncated sum to an infinite Fourier series, e.g. the finite set of data one is capable of acquiring in application.  A classical result is the determination of the partial Fourier sum
\begin{equation}\label{SN}
S_n f(x) = \sum_{k=-n}^n \hat f_k e^{ikx},
\end{equation}
where here we are using the convention that $f$ is $2\pi$-periodic and 
$
\hat f_k = \tfrac{1}{2\pi} \int_{-\pi}^\pi f(x) e^{-ikx} \, \rmd x .
$
It is well-known that this partial Fourier sum results in
\begin{equation}\label{dirichlet}
 S_n f(x) = f*D_n(x), \quad \text{where} \quad D_n(x) = \sum_{k=-n}^n e^{ikx} = \frac{\sin \left( (n+1/2) x \right)}{ \sin\left(x/2 \right)} ,
\end{equation}
and $D_n$ is known as the Dirichlet kernel.  By Parseval's theorem, evaluating this sum minimizes the $L_2$ error in the sense that if we let $S_n^\sigma f(x) = \sum_{k=-n}^n  \sigma_k \hat f_k e^{i kx}$, then
\begin{equation}\label{SNsigma}
 \arg \min_\sigma ||S_n^\sigma f(x) - f(x)||_2
\end{equation}
is achieved when $\sigma$ is the vector of all ones.  On the other hand, it does not necessarily minimize the pointwise error and results in undesirable Gibbs or ringing artifacts near jumps.  To alleviate these artifacts, the $\sigma$ weights may be dampened near the ends of the sum.  For instance, using the linear weights $\sigma_k = (1-\frac{|k|}{n})$ results in $S_n^\sigma f(x) = f*F_n(x)$, where $F_n$ is a more localized kernel known as the Fejer kernel.  The better localization of the kernel results in reduced pointwise error and pointwise convergence.  There exists a number of such filters for these weights, e.g. Hann, Hamming, and Gaussian windows, that all produce moderate improvements.  In section \ref{sec: reg} we provide more discussion on this and regularization techniques to reduce ringing artifacts.

\subsection{1D Kernels}
In SAR the gathered set of finite frequency data are not centered at the origin, and the sort of partial sums arising take the form
\begin{equation}\label{dirichlet-off}
 S_{K_1,K_2} f(x) = \sum_{k=K_1}^{K_2} \hat f_k e^{i k x},
\end{equation}
for some integers $K_1<K_2$.  In the context of SAR this sum may be interpreted as the partial sum or inverse Fourier transform of the line data at a particular angle $\theta$ to form the range profile or $p_\theta f$.  Hence the 1D range profiles from which the image is formed may contain the Gibbs ringing, which will result in ringing artifacts in the final 2D image (see Figure \ref{fig: example}).  In the proceeding set of results, we develop these concepts and generalize these ideas to the 2D sampling modality.

The following proposition states that this offset Fourier sum results in a convolution with a Dirichlet kernel multiplied by a phase term, which varies at a rate proportional to the distance of the central frequency from the origin.  
\begin{proposition}\label{prop:1}
Let $f\in L^2(-\pi ,\pi)$, $K_1,K_2\in \Z$, and consider the partial sum in (\ref{dirichlet-off}).  Define the central frequency as $K_c = (K_1+K_2)/2$ and the bandwidth $B = K_2-K_1$,  then the partial sum in (\ref{dirichlet-off}) results in 
\begin{equation}\label{dirichlet-off-2}
S_{K_1,K_2} f(x) = f* G(x;K_c, B) , ~ \text{where} ~ G(x; K_c, B) = e^{iK_cx} D_{B/2} (x).
\end{equation}
\end{proposition}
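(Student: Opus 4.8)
The plan is to reduce the statement to a direct evaluation of a finite geometric sum. First I would write each Fourier coefficient explicitly as $\hat f_k = \frac{1}{2\pi}\int_{-\pi}^{\pi} f(t) e^{-ikt}\,\rmd t$, which is legitimate since $f\in L^2(-\pi,\pi)\subset L^1(-\pi,\pi)$. Substituting this into (\ref{dirichlet-off}) and interchanging the finite sum with the integral — no convergence issue arises, as there are only $B+1$ terms — gives
\begin{equation*}
S_{K_1,K_2} f(x) = \frac{1}{2\pi}\int_{-\pi}^{\pi} f(t)\left(\sum_{k=K_1}^{K_2} e^{ik(x-t)}\right)\rmd t = (f*g)(x),
\end{equation*}
where $g(u) = \sum_{k=K_1}^{K_2} e^{iku}$ and $*$ denotes the normalized convolution on the torus, consistent with the convention used for the Dirichlet kernel in (\ref{dirichlet}).

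It then remains only to identify $g$ with $G(\,\cdot\,;K_c,B)$. I would factor out the central frequency via the substitution $k = K_c + j$, so that $j$ runs symmetrically over $\{-B/2, -B/2+1, \dots, B/2\}$ (half-integers when $B$ is odd), giving $g(u) = e^{iK_c u}\sum_{j=-B/2}^{B/2} e^{iju}$. The remaining sum is a symmetric geometric series, which collapses by the standard telescoping trick: multiplying and dividing by $e^{iu/2}-e^{-iu/2} = 2i\sin(u/2)$ yields $\sum_{j=-B/2}^{B/2} e^{iju} = \sin\!\big((B+1)u/2\big)/\sin(u/2)$. This is exactly $D_{B/2}(u)$, read off from the closed form in (\ref{dirichlet}) with the (possibly half-integer) index $n = B/2$, since $n+\tfrac12 = (B+1)/2$. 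Assembling the pieces, $g(u) = e^{iK_c u} D_{B/2}(u) = G(u;K_c,B)$, which is the assertion.

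The computation is elementary, so there is no real obstacle — only a couple of bookkeeping points deserve a remark. When $B$ is odd the central frequency $K_c$ is a half-integer, so $D_{B/2}$ should be understood through its closed-form expression (equivalently, as a sum of half-integer harmonics rather than a genuine finite Fourier series in integer modes); this is precisely why the statement is phrased using the formula for the kernel. Also, the removable singularities of $\sin(u/2)$ in the denominator at $u\in 2\pi\Z$ are handled by the obvious limit, exactly as for the ordinary Dirichlet kernel. No further subtleties arise.
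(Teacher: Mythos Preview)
Your argument is correct and follows essentially the same route as the paper's own proof: both write the partial sum as an integral, swap the finite sum inside to obtain a convolution with $\sum_{k=K_1}^{K_2} e^{iku}$, factor out $e^{iK_c u}$, and recognize the remaining symmetric sum as $D_{B/2}$. You are simply more explicit than the paper in justifying the interchange, in evaluating the geometric sum to the closed sine-ratio form, and in flagging the half-integer index issue when $B$ is odd.
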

\begin{proof}
First rewrite (\ref{dirichlet-off-2}) in its integral form
\begin{equation}
  S_{K_1,K_2} f(x) = \sum_{k=K_1}^{K_2} \frac{1}{2\pi} \int_{-\pi}^\pi f(y) e^{-i ky} \, dy e^{i k x} = \frac{1}{2\pi}\int_{-\pi}^\pi f(y) \left[\sum_{k=K_1}^{K_2} e^{i k (x-y) }\right] \, dy,
\end{equation}
Hence the result is the convolution of $f$ with $\sum_{k=K_1}^{K_2} e^{ik x}$, and it suffices to show that $\sum_{k=K_1}^{K_2} e^{i k x} =  e^{iK_c t}D_{B/2}(x)$.  The calculation can be performed as follows:
\begin{align*}
 \sum_{k=K_1}^{K_2} e^{i k x} 
 & = e^{iK_c x} \sum_{k=K_1}^{K_2} e^{i(k-K_c)x}\\
 & = e^{iK_c x} \sum_{k=-B/2}^{B/2} e^{ikx}.
\end{align*}
Observing the last sum to be $D_{B/2}(x)$ completes the proof.
\end{proof}
\begin{figure}
 \centering
 \includegraphics[width=1\textwidth]{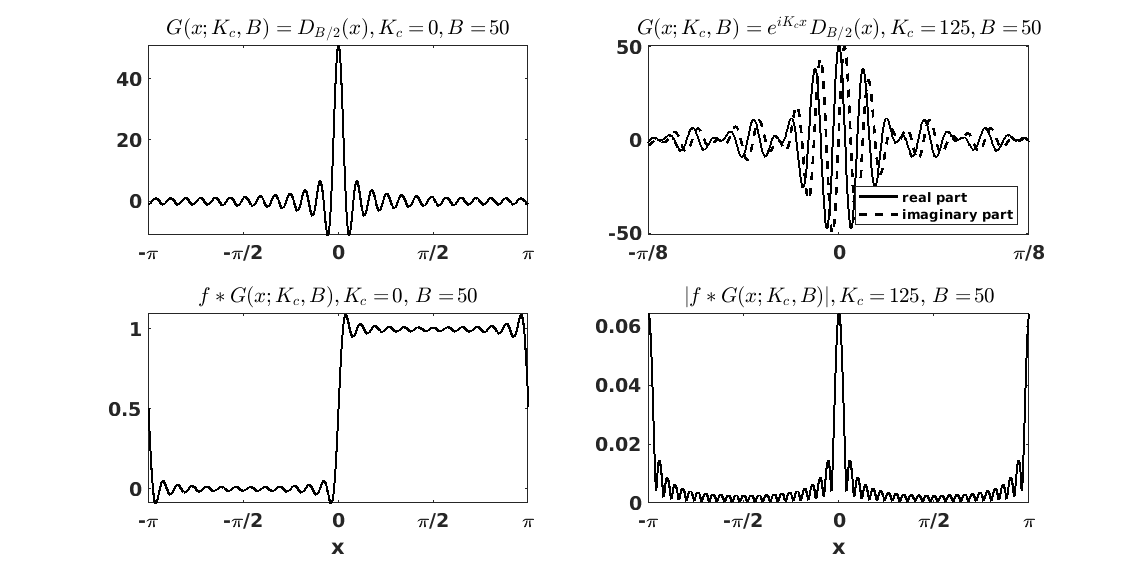}
\caption{Left: Real valued Dirichlet kernel with $K_c = 0$, and the convolution the kernel with a simple step function (bottom).  Right: Complex valued Dirichlet kernel with $K_c = 125$, and the convolution of the kernel with the same step function (bottom).}
\label{fig: 1Dconv}
 \end{figure}

This complex valued version of the Dirichlet kernel is shown in the top right of Figure \ref{fig: 1Dconv} along with its real counterpart in the top left, and the convolution with the function 
\begin{equation*}
 f(x) =
 \begin{cases}
      0 ,& x \in [-\pi , 0 ) \\
      1, & x\in [0,\pi )
  \end{cases}.
\end{equation*}
Observe the Gibbs phenomenon in both convolutions, and the complex version only detects the edges.  To formally connect this with SAR, we first provide the following lemma.  

\begin{lemma}\label{cor1}
Suppose for some angle $\theta$ we have data of the form (\ref{eq1}), for $k = k_1, \dots , k_M$, , with $k_j = k_1 + (j-1)\Delta k$ for each $j$, and $\Omega = \{(x,y) \, |\, x^2 +y^2 \le R^2\}$.  If we define the central frequency $K_c = (k_1+k_M)/2$, then evaluating the partial Fourier sum as in (\ref{dirichlet-off}) yields
\begin{equation}\label{dirichlet-off-3}
 \sum_{k = k_1}^{k_M} \hat f(k , \theta) e^{ikx} = S_{k_1,k_M} {p_\theta f } (x)
= p_\theta f * H (x; K_c , M, \Delta k ),
\end{equation}
where 
\begin{equation}\label{H-kernel}
H(x; K_c, M, \Delta k)= e^{iK_c x} D_{\frac{M-1}{2}}( \Delta k x ),
\end{equation}
and
\begin{equation}\label{proj-def}
 p_\theta f(x) = \int_{-\sqrt{R^2 - x^2}}^{\sqrt{R^2-x^2}} f((x,y)Q_\theta\T ) \, \rmd y , \quad  Q_\theta = 
 \left[
\begin{array}{cc}
\cos \theta & -\sin \theta\\
\sin \theta & \cos \theta
\end{array}
\right] .
\end{equation}
\end{lemma}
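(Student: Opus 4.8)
The plan is to reduce the statement to Proposition~\ref{prop:1} (plus the tomographic reformulation of the data), the only genuinely new ingredients being the non-unit frequency spacing $\Delta k$ and the fact that $K_c$ and $\tfrac{M-1}{2}$ need not be integers. Equivalently one could rescale Proposition~\ref{prop:1} from $(-\pi,\pi)$ with unit-spaced integer frequencies to $[-R,R]$ with $\Delta k$-spaced frequencies, but it is cleaner to re-collapse the finite sum directly.

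First I would recall that, under the stated hypothesis that $\Omega$ is the ball of radius $R$, equation~(\ref{tomo}) gives exactly $\hat f(k,\theta) = \int_{-R}^{R} p_\theta f(w)\, e^{-ikw}\, \rmd w$ with $p_\theta f$ as in~(\ref{proj-def}); that is, $\hat f(\cdot,\theta)$ is the one-dimensional Fourier transform of the range profile $p_\theta f$, so the left-hand side of~(\ref{dirichlet-off-3}) is by definition the offset partial Fourier sum $S_{k_1,k_M}p_\theta f(x)$. Since $f$ is supported on the ball, $p_\theta f$ is supported on $[-R,R]$ and Fubini applies to the finite sum, so substituting the integral representation and interchanging sum and integral yields
\begin{equation*}
\sum_{j=1}^{M}\hat f(k_j,\theta)\, e^{ik_j x} = \int_{-R}^{R} p_\theta f(w)\left[\,\sum_{j=1}^{M} e^{ik_j(x-w)}\,\right]\rmd w .
\end{equation*}
It then suffices to identify the bracketed kernel, as a function of $x-w$, with $H(\,\cdot\,;K_c,M,\Delta k)$.

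Next I would evaluate $\sum_{j=1}^{M} e^{ik_j t}$ at $t = x-w$. Writing $k_j = k_1 + (j-1)\Delta k$ and $K_c = (k_1+k_M)/2 = k_1 + \tfrac{M-1}{2}\Delta k$, one gets $k_j - K_c = \bigl(j - \tfrac{M+1}{2}\bigr)\Delta k$, so factoring out $e^{iK_c t}$ reindexes the sum as $\sum_{\ell} e^{i\ell\,\Delta k\, t}$ with $\ell$ running over $-\tfrac{M-1}{2},\dots,\tfrac{M-1}{2}$ in unit steps — precisely the collapse carried out in the proof of Proposition~\ref{prop:1}, now with the dilation $t\mapsto \Delta k\, t$. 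This gives $\sum_{j=1}^{M} e^{ik_j t} = e^{iK_c t} D_{(M-1)/2}(\Delta k\, t) = H(t;K_c,M,\Delta k)$; putting $t = x-w$ back into the integral above produces the convolution $p_\theta f * H(\,\cdot\,;K_c,M,\Delta k)(x)$, as claimed.

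The points needing care — the closest thing to an obstacle here — are purely bookkeeping. When $M$ is even, both $K_c$ and the index $\tfrac{M-1}{2}$ are half-integers, so I would remark that the closed form $D_n(y) = \sin\bigl((n+\tfrac12)y\bigr)/\sin(y/2)$ remains valid for half-integer $n$ when the sum steps through half-integers in unit increments (for instance $n=\tfrac12$ gives $2\cos(y/2)$, consistent with the formula), and that $e^{iK_c t}$ is then genuinely complex-valued but still well defined. One should also keep in mind that $p_\theta f$ and $H$ are functions on $[-R,R]$ rather than $2\pi$-periodic objects, so ``$*$'' denotes the finite-interval convolution displayed above; no periodicity of $f$ is needed, since the identity is a direct manipulation of the finite sum.
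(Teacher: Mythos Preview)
Your proposal is correct and follows essentially the same approach as the paper: reduce the 2D data to the 1D Fourier transform of the range profile $p_\theta f$, interchange sum and integral, and then collapse $\sum_{j=1}^{M} e^{ik_j t}$ by factoring out $e^{iK_c t}$ exactly as in Proposition~\ref{prop:1}. The only cosmetic difference is that you invoke the tomographic identity~(\ref{tomo}) already established earlier, whereas the paper redoes the rotation change of variables inside the proof; your extra remarks on half-integer indices and the finite-interval convolution are not in the paper but are welcome clarifications.
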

\begin{proof}
Writing the sum in (\ref{dirichlet-off-3}) in integral form leads to 
\begin{equation}
\sum_{j=1}^M \iint_{\Omega} f(w , z) e^{-ik_j (w,z)\cdot (\sin \theta , \cos \theta) }\, \rmd w \, \rmd z \, e^{ik_j x} =
\iint_{\Omega} f(w, z) \sum_{j=1}^M e^{-ik_j ( w,z)\cdot (\sin \theta , \cos \theta)} e^{ik_j x}  \, \rmd w\, \rmd z 
\end{equation}
Making the change of variables $ (w  , z) Q_\theta = (\alpha , \beta ) $, where $Q_\theta$ is the standard rotation matrix, yields
\begin{equation}
\iint_\Omega f((\alpha , \beta)Q_\theta^T) \left[ \sum_{j=1}^M e^{ik_j (x-\alpha)} \right] \, \rmd\alpha \, \rmd\beta = \iint_\Omega f((\alpha , \beta)Q_\theta^T) \rmd\beta \left[ \sum_{j=1}^M e^{ik_j (x-\alpha)} \right] \, \rmd\alpha 
\end{equation}
Evaluating the integral over $\beta$ reduces this expression to
\begin{equation}
\int p_\theta f (\alpha) \left[ \sum_{j=1}^M e^{ik_j (x-\alpha)} \right] \, \rmd\alpha ,
\end{equation}
hence for the remainder of the proof it suffices to show 
\begin{equation}\label{kernel-new}
\sum_{j=1}^M e^{ik_j x} = H (x; K_c, M , \Delta x) = e^{iK_c x} D_{\frac{M-1}{2}}( \Delta k x ).
\end{equation}
This can be done in a similar fashion used to prove Proposition \ref{prop:1}.
\end{proof}

\subsection{2D Kernels}
In this section, the 1D kernels derived in the previous section are extended to 2D sampling in SAR. Before arriving at the main result, the following lemma is needed.

\begin{lemma}\label{proj-lem}
Consider the operator $p_\theta$ acting on $f(x,y) \in L^2(\Omega)$ as defined in (\ref{proj-def}).  Then for any 1D function $g(x) : [-R,R]\rightarrow \C $, the adjoint operator of $p_\theta$ is given by 
\begin{equation}
p_\theta^* g(x,y) = g(\vxt \cdot (x,y)).
\end{equation}
\end{lemma}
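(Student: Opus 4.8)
The plan is to verify the defining relation of the adjoint,
\begin{equation*}
\langle p_\theta f, g\rangle_{L^2[-R,R]} = \langle f, p_\theta^* g\rangle_{L^2(\Omega)},
\end{equation*}
for all $f\in L^2(\Omega)$ and all (sufficiently regular) $g$ on $[-R,R]$, by means of a single rotation change of variables. First I would note that the proposed formula $p_\theta^* g(x,y)=g(\vxt\cdot(x,y))$ is meaningful on $\Omega$: by Cauchy--Schwarz $|\vxt\cdot(x,y)|\le|(x,y)|\le R$ there, so $g$ is only ever evaluated inside its domain, and the result is constant along each line $\vxt\cdot(x,y)=\mathrm{const}$ — i.e. it is exactly the ``smearing'' of the range profile perpendicular to $\vxt$, which is the backprojection operation alluded to around (\ref{mf}). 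With well-definedness settled, only the pairing identity remains.

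Next I would expand the left-hand side using the definition (\ref{proj-def}):
\begin{equation*}
\langle p_\theta f, g\rangle = \int_{-R}^R \left( \int_{-\sqrt{R^2-x^2}}^{\sqrt{R^2-x^2}} f\big((x,y)Q_\theta\T\big)\,\rmd y \right) \overline{g(x)}\,\rmd x = \iint_\Omega f\big((x,y)Q_\theta\T\big)\,\overline{g(x)}\,\rmd A,
\end{equation*}
where the last equality just recognizes that the region swept out by the iterated limits is precisely the disk $\Omega=\{x^2+y^2\le R^2\}$; interchanging the iterated integral with the substitution below is justified by Fubini, since on the bounded domain $f\in L^2(\Omega)\subset L^1(\Omega)$ and $g$ is bounded on the relevant range (alternatively, take $f,g$ smooth first and pass to the limit).

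Then I would substitute $(u,v)=(x,y)Q_\theta\T$, equivalently $(x,y)=(u,v)Q_\theta$. This is an orthogonal transformation, so its Jacobian determinant is $1$ and it maps $\Omega$ onto itself. Reading off the first coordinate of $(u,v)Q_\theta$ gives $x=u\cos\theta+v\sin\theta=\vxt\cdot(u,v)$. Hence
\begin{equation*}
\langle p_\theta f, g\rangle = \iint_\Omega f(u,v)\,\overline{g\big(\vxt\cdot(u,v)\big)}\,\rmd A = \iint_\Omega f(x,y)\,\overline{p_\theta^* g(x,y)}\,\rmd A = \langle f, p_\theta^* g\rangle,
\end{equation*}
which is the assertion.

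The computation is short, and the only step demanding care is the bookkeeping with the rotation: one must orient the substitution so that $Q_\theta\T$ — and not $Q_\theta$ — continues to act on the argument of $f$ in agreement with (\ref{proj-def}), and one must confirm that after the rotation the coordinate functional $x$ becomes exactly $\vxt\cdot(u,v)$ rather than the second coordinate or $(\cos\theta,-\sin\theta)\cdot(u,v)$. The Jacobian, the invariance of $\Omega$, and the Fubini interchange are all routine.
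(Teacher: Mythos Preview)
Your proof is correct and follows essentially the same route as the paper's own argument: expand $\langle p_\theta f, g\rangle$ via the definition (\ref{proj-def}), apply the rotation $(u,v)=(x,y)Q_\theta\T$ so that the argument of $f$ becomes $(u,v)$ and the first coordinate becomes $\vxt\cdot(u,v)$, and read off the adjoint. You are in fact more careful than the paper, explicitly noting the well-definedness of $g(\vxt\cdot(x,y))$ on $\Omega$, the unit Jacobian, the rotational invariance of $\Omega$, and the Fubini justification.
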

\begin{proof}
For all $f(x,y)$ and $g(x)$ the adjoint operator satisfies
$$
\langle p_\theta f , g \rangle = \langle f ,p_\theta^* g \rangle.
$$
Starting from the left side we obtain
\begin{equation}
\begin{split}
\langle p_\theta f , g \rangle 
& = \int_{-R}^R p_\theta f(x) \overline{g(x)} \, \rmd x 
 = \iint_\Omega f((x,y) Q_\theta\T ) \, \rmd y \, \overline{g(x)} \, \rmd x \\
& = \iint_\Omega f(x',y') \overline{g((\cos \theta, \sin \theta)\cdot (x' , y') )} \, \rmd x' \rmd y'\\
& = \iint_\Omega f(x',y') \overline{p_\theta^*g(x',y') }\, \rmd x' \rmd y' =  \langle f ,p_\theta^* g\rangle
\end{split}
\end{equation}
\end{proof}

In words, $p_\theta$ is the operator which projects a 2D function into a 1D function by integrating in the direction of the angle $\theta$.  Likewise, the corresponding adjoint operator $p_\theta^*$ takes a 1D function and \emph{unwraps} the function to a 2D function by stretching it in the direction of angle $\theta$. This process can be called the backprojection of a single projection.  Figure \ref{fig: proj-example} provides a visual for these operators.

\begin{figure}
\centering
\includegraphics[width=1\textwidth,trim={7cm 0cm 7cm 0cm},clip]{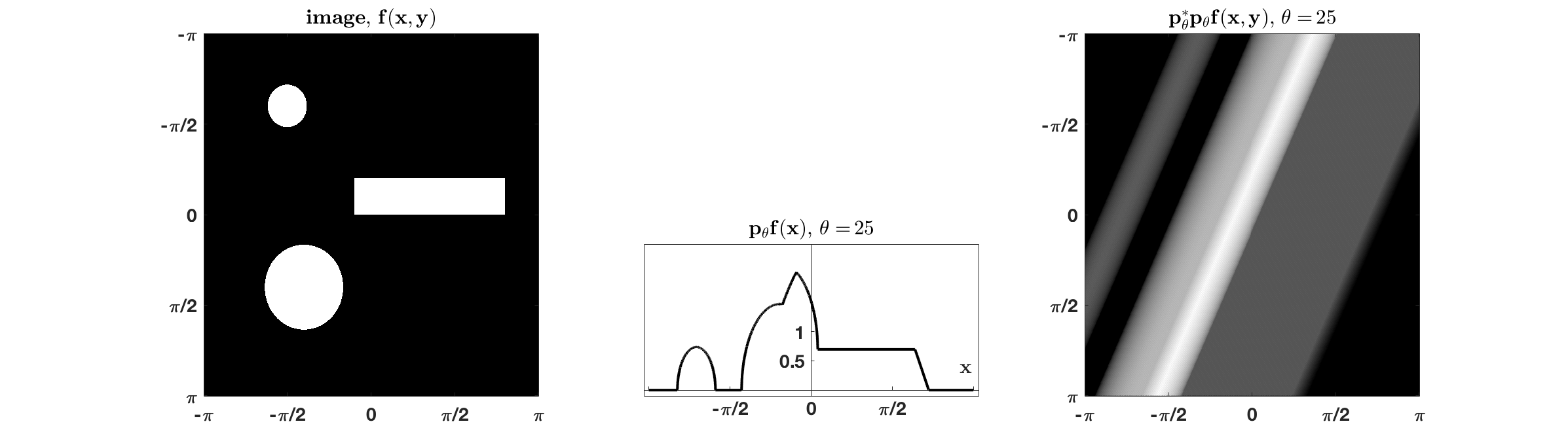}
\caption{An example of the projection of an image (left), $p_\theta f$ (middle), and applying the adjoint or \emph{backprojection} by $p_\theta^* p_\theta f$ (right).}
\label{fig: proj-example}
\end{figure}
\begin{theorem}\label{main-theorem}
Consider the conditions of Lemma \ref{cor1}.  Then the matched filter reconstruction given by (\ref{mf}) takes the form
\begin{equation}\label{mfconv}
 f_{MF} (x,y) = \sum_{\theta \in \Theta} p_{\theta} f * H(\vxt \cdot \bvx ; K_c, M , \Delta k ),
\end{equation}
where $H$ and $p_\theta f$ are given in Lemma \ref{cor1}.  Moreover, the matched filter reconstruction can be written as the convolution of $f$ with a point spread function or kernel by
$$
f_{MF}(x,y) = f* \mathcal K(x,y; K_c , M , \Delta k , \Theta ) ,
$$
where
\begin{equation}\label{main-2D-conv}
\mathcal K(x,y; K_c, M, \Delta k , \Theta ) = 
\sum_{\theta \in \Theta}  H ( \vxt \cdot \bvx ; K_c , M , \Delta k).
\end{equation}
\end{theorem}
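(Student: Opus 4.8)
The plan is to establish the two claimed representations in turn, the second following from the first via a projection--backprojection convolution identity. First I would prove (\ref{mfconv}): fixing an azimuth angle $\theta\in\Theta$, the inner sum in the matched filter (\ref{mf}) is exactly the offset partial Fourier sum treated in Lemma \ref{cor1}, only with its real argument set equal to the scalar $\vxt\cdot\bvx = x\cos\theta+y\sin\theta$. Since (\ref{dirichlet-off-3}) is an identity of functions of one real variable, this substitution is legitimate and gives $\sum_{j=1}^M \hat f(k_j,\theta)\,e^{ik_j\vxt\cdot\bvx} = (p_\theta f * H)(\vxt\cdot\bvx; K_c, M, \Delta k)$; summing over the angles in $\Theta$ and invoking linearity yields (\ref{mfconv}).

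For the second claim the task is to recognize each summand $(p_\theta f * H)(\vxt\cdot\bvx)$, a one-dimensional convolution sampled along the ray in direction $\theta$, as a genuine two-dimensional convolution of $f$ against the ridge kernel $p_\theta^{*}H$ of Lemma \ref{proj-lem}. The identity I would use is
\[
 f * \bigl(p_\theta^{*}g\bigr)(x,y) \;=\; \bigl(p_\theta f * g\bigr)(\vxt\cdot\bvx)
\]
valid for any bounded $g:[-R,R]\to\C$, the convolution on the left being two-dimensional. I would prove it by writing $f*(p_\theta^{*}g)(\bvx) = \iint f(\bvx-\bvx')\,g(\vxt\cdot\bvx')\,\rmd A'$ (using Lemma \ref{proj-lem} for $p_\theta^{*}g$ and that $f$ is supported in $\Omega$) and then applying the rotation change of variables $\bvx'=(u,v)Q_\theta\T$, under which $\vxt\cdot\bvx'=u$, the Jacobian equals $1$, and $\vxt\cdot(\bvx-\bvx')=\vxt\cdot\bvx-u$; integrating in $v$ collapses $f$ into $p_\theta f$ by definition (\ref{proj-def}), leaving exactly $p_\theta f * g$ evaluated at $\vxt\cdot\bvx$.

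Applying this with $g = H(\,\cdot\,;K_c,M,\Delta k)$ turns (\ref{mfconv}) into $f_{MF}(x,y)=\sum_{\theta\in\Theta} f*(p_\theta^{*}H)(x,y)$, and pulling the finite sum inside the convolution gives $f_{MF}=f*\mathcal K$ with $\mathcal K(x,y;K_c,M,\Delta k,\Theta)=\sum_{\theta\in\Theta}p_\theta^{*}H(\vxt\cdot\bvx)=\sum_{\theta\in\Theta}H(\vxt\cdot\bvx;K_c,M,\Delta k)$, which is (\ref{main-2D-conv}). I expect the only genuine obstacle to be the ridge-kernel identity above: one must keep in mind that the two-dimensional ``convolution'' against $p_\theta^{*}H$ is the directional one made rigorous by the change of variables, and that $p_\theta f$, defined only on $[-R,R]$ in (\ref{proj-def}), should be read as extended by zero outside $\Omega$ so that its convolution with the bounded but non-compactly supported Dirichlet-type kernel $H$ is well defined. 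Everything else reduces to a direct appeal to Lemmas \ref{cor1} and \ref{proj-lem} and to linearity.
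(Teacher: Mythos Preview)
Your proposal is correct and follows essentially the same route as the paper. For (\ref{mfconv}) the paper phrases the ``evaluate at $\vxt\cdot\bvx$'' step via the adjoint $p_\theta^{*}$ from Lemma~\ref{proj-lem} rather than by direct substitution, and for (\ref{main-2D-conv}) it carries out exactly your rotation change of variables and $v$-integration directly on $(p_\theta f * H)(\vxt\cdot\bvx)$, rather than first isolating the ridge-kernel identity $f*(p_\theta^{*}g)=(p_\theta f*g)(\vxt\cdot\bvx)$ as a standalone lemma; the underlying computation is identical.
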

\begin{proof}
By Lemma \ref{proj-lem} and equation (\ref{mf}), notice the matched filter response can be written as
 $$
 f_{MF}(x,y) = \sum_{\theta\in \Theta} p_{\theta}^* \left[ \sum_{j=1}^{M}  \hat f(k_j,\theta) \exp(ik_j x) \right].
 $$
 The resulting convolution from the inner sum is given by Lemma \ref{cor1}, and this completes the proof of (\ref{mfconv}).

 To prove (\ref{main-2D-conv}), we write (\ref{mfconv}) in its integral form:
  \begin{equation}
 \begin{split}
 f_{MF}(x,y) & = 
 \sum_{\theta \in \Theta} \int_{-R}^R p_\theta f(w) H(\vxt \cdot \bvx - w) \, \rmd w\\
 & = \sum_{\theta \in \Theta} \iint_\Omega f((w,z) Q_\theta\T ) \, \rmd z  \, H(\vxt \cdot \bvx  - w) \, \rmd w\\
 & =  \sum_{\theta \in \Theta} \iint_\Omega f(w',z' )   H(\vxt \cdot \bvx  - (w',z') ) \, \rmd z' \rmd w' \\
   & =  \iint_\Omega f(w',z' )   \left[ \sum_{\theta \in \Theta} H(\vxt \cdot \bvx  - (w',z') )\right] \, \rmd z' \rmd w' ,
  \end{split}
  \end{equation}
  where in the second line we used the definition of $p_\theta f$, and in the third line we made the substitution $(w',z') = (w,z) Q_\theta\T$.  We observe the last line to be the desired result.

\end{proof}

\subsection{Example and the Inclusion of Noise}
What has been proven is that even with the offset Fourier coefficients, the partial sums still yield convolutions with similar kernels as in the symmetric case with the exception of an addition phase term.  Moreover, these results naturally extend to the 2D case with Fourier data on the polar grid, i.e. SAR data.  A visualization of this kernel is provided in Figure \ref{fig: kernel}.  For the parameters of the kernel, i.e. the sampling parameters $(K_c , M , \Delta k, \Theta)$, we repeat the setup from Figure 1 in \cite{SARMATLAB}.  Using our notation, these parameters are given by $M= 512$, angle of elevation $\phi = 30\degree$, central frequency $\alpha_c = \frac{\alpha_{512}+\alpha_1}{2} = 10$GHz, and bandwidth given by $ \alpha_{512}-\alpha_1 = 600$MHz.  The central angle is $\theta = 50\degree$, and the samples are acquired at 128 equispaced angles over a $3\degree$ total azimuth range.  The scene radius is chosen to be $R = 5m$, and the number of pixels is $N=500$, yielding a pixel size of 2cm.  Based on these parameters, the digital parameters may be determined based on the discussion in section \ref{sec:freqs}, and finally the kernel given by (\ref{main-2D-conv}) is constructed and visualized in Figure \ref{fig: kernel}.

In \cite{SARMATLAB} these imaging parameters were used to reconstruct 3 isotropic point scatterers (delta functions), and hence the resulting reconstruction according to Theorem \ref{main-theorem} is somewhat a superposition of three kernels $\mathcal K$.  Comparing their imaging result with the DB scale image of the kernel we have provided (bottom left) gives a visual confirmation of our derivation.

Finally, it is worth mentioning how these results differ with the inclusion of noise on the Fourier coefficients.  For simplicity consider the simple 1D Fourier sum as in (\ref{dirichlet-off}) and assume the Fourier coefficents there are replaced with noisy verisions, $\widetilde{\hat f_k} = \hat f_k + \epsilon_k$.  Then the result in (\ref{dirichlet-off}) takes the form
\begin{equation}
\begin{split}
 S_{K_1,K_2} f(x) 
 &= \sum_{k=K_1}^{K_2} \hat f_k e^{ikx} + \sum_{k=K_1}^{K_2} \epsilon_k e^{ikx} \\
 & = f*G(x;K_c , B) + \eta(x).
\end{split}
\end{equation}
Observe that the left sum is determined by Proposition \ref{prop:1}, and so this noisy version will still obtain the convolutions as before but with an added noise term $\eta(x)$ given by the right sum.  Suppose $\epsilon_k$ are i.i.d. mean zero complex Gaussian, e.g. $\epsilon_k = X + iY$, where $X$ and $Y$ are $N(0,\sigma^2)$.  Then since the sum of mean zero Gaussian random variables is a mean zero Gaussian random variable, we see that $\eta(x)$ is mean zero Gaussian, hence the added Gaussian noise in the coefficients results in added Gaussian noise in final image.  These arguments easily extend to our 2D convolution results from Theorem \ref{main-theorem}.

\begin{figure}
\centering
\includegraphics[width=1\textwidth,trim={1cm 0cm 1cm 0cm},clip]{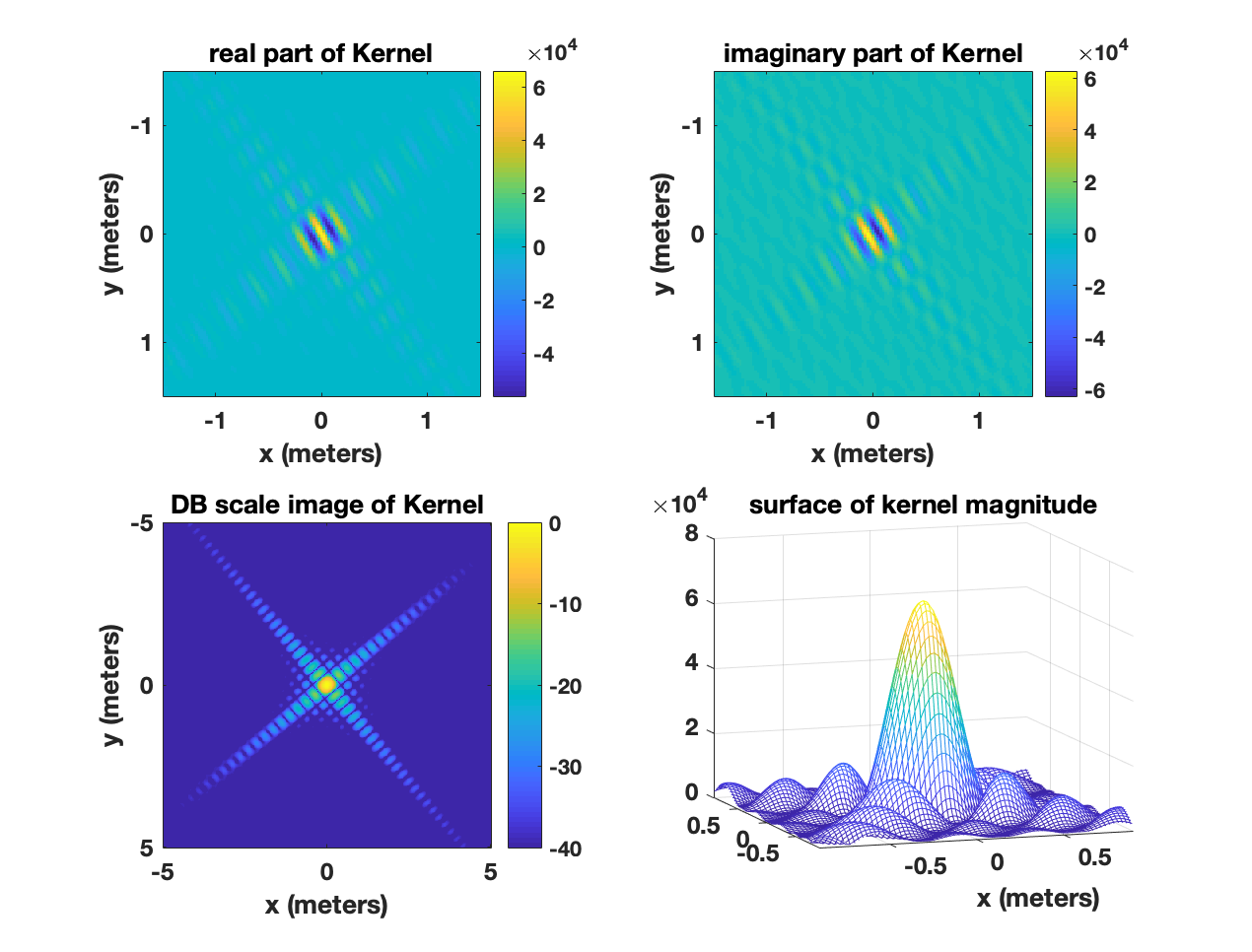}
\caption{Visualization of the kernel given by (\ref{main-2D-conv}) based on the imaging parameters in Figure 1 of \cite{SARMATLAB}.}
\label{fig: kernel}
\end{figure}

\section{Reconstruction of Complex Signals with Random Phases} \label{sec:rand}
It has been well documented that reconstructed images in SAR exhibit random phase values from one pixel to the next \cite{munson1984image,munson1984importance,munson86}.  This phenomenon occurs in a number of other imaging applications, and can be attributed to several factors.  In large part it is due to image digitization of an underlying continuous scene with complex valued scatterers that vary at a microscopic resolution far higher than we can expect to reconstruct (see for example, \cite{pinel2013electromagnetic}).  As it turns out, this property is simultaneously useful and destructive for reconstructing SAR images.

The most notable negative effect of the randomness is the infamous presence of speckle \cite{dainty2013laser,goodman2007speckle}, a multiplicative type of noise in the reconstructed images.  However, this complexity allows for high fidelity reconstructions from only a narrow band of high frequency Fourier data.  More specifically, for complex valued signals with uniformly distributed random phases, which we refer to as \emph{random complex signals}, it has been shown that the magnitude error in the reconstruction is dependent on the given bandwidth of data, $B$, and independent upon the band center, $K_c$.  This idea goes against conventional Fourier analysis where, due to the usual assumption of the decay of the coefficients for smooth functions, the most important frequency band with regard to reconstruction error is typically near the origin.

\subsection{Simple Examples with Random Complex Signals}
Before proceeding with the detailed discussion on these topics, a few numerical examples are first provided to motivate the mathematical derivations.  First we turn the attention towards Figure \ref{fig: phase1D} to emphasize some of these ideas first for one dimensional problems.  Shown in the figure are magnitudes of partial Fourier sums of the form (\ref{dirichlet-off}) of the 1D random complex signal with a magnitude given by
\begin{equation}
 |f(x)| =
 \begin{cases}
      0 ,& x \in [-\pi , 0 ) \\
      1, & x\in [0,\pi )
  \end{cases}.
\end{equation}
The setup in this problem is the same as in Figure \ref{fig: 1Dconv}, however random uniformly distributed phases were implemented onto the discretized signal $f$.  That is, $f_j = |f_j| e^{i\phi_j}$ where $\phi_j$ is a random variable with a uniform distribution on $[-\pi,\pi)$.  The bandwidth is again given by $B=50$ and the central frequencies used are $K_c = 0 \text{ and } K_c = 125$, i.e. one zero centered Fourier sum and one off-centered Fourier sum.  These partial Fourier sums are of course characterized by Proposition \ref{prop:1}.   Looking back at Figure \ref{fig: 1Dconv} with the real signal (no random phases added), the case of the zero centered sum results in an accurate representation of $f$, while the off-centered sum only captures the jump at $0$.  On the other hand, in the random phased case shown in Figure \ref{fig: phase1D}, the resulting reconstructions are visually equivalent, irrespective of the central frequency.  On the other hand they both appear quite noisy, which is again described as speckle noise.  One interpretation of this result is that due to the random phases on the signal, practically every nonzero point becomes an edge.  Therefore, although the offset Fourier data typically only captures edge information as indicated in Figure \ref{fig: 1Dconv}, edges for random phased signals are \emph{everywhere}.

The same observation can be made with off-centered 2D partial Fourier sums of the Shepp-Logan image shown in of Figure \ref{fig: phase}, where again one sum was taken for a real image and the other with added random phases.  The $k$ values used are akin to the polar sampling acquisition in SAR and are plotted in the bottom right panel, and hence the partial Fourier sum is equivalent to the convolution with the kernel $\mathcal K$ derived in Theorem \ref{main-theorem}.  Notice that with the real image only the edges are recovered, but with the random phased image we recovered the full image, albeit with the effect of speckle.  The expected value of the image (with the random variables again being the phases) is shown in the bottom left, which is explained more clearly in what follows.

\begin{figure}
 \centering
 \includegraphics[width=.45\textwidth]{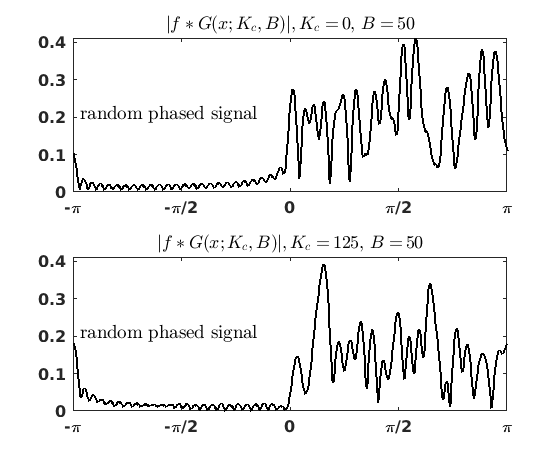}
 \caption{Repeating the partial Fourier sums from Figure \ref{fig: 1Dconv} with a random complex signal.  Observe the presence of speckle in the reconstructions and the apparent independence of the reconstruction and the central frequency, $K_c$.}
 \label{fig: phase1D}
\end{figure}

\begin{figure}
 \centering
 \includegraphics[width=1\textwidth]{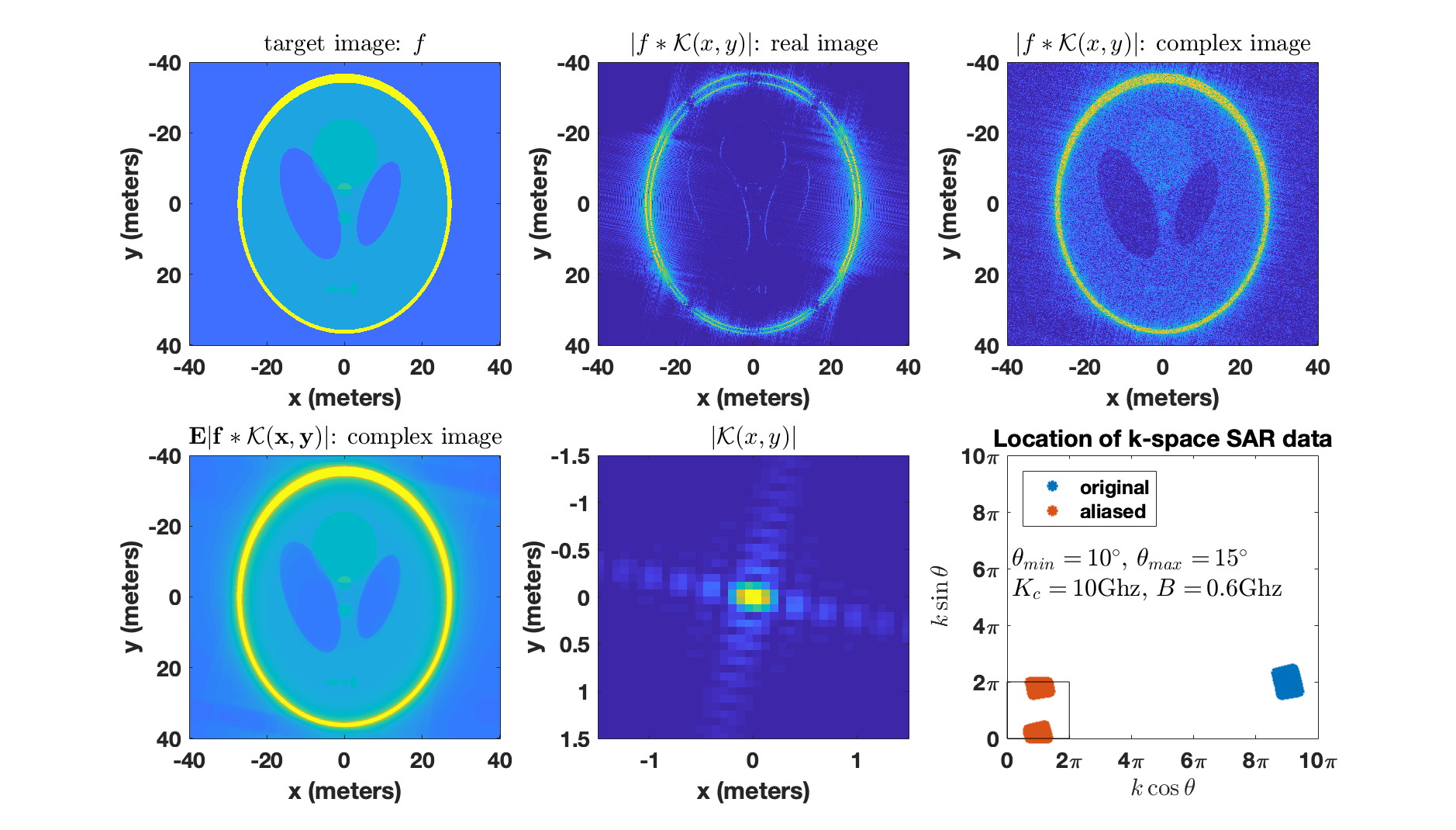}
 \caption{Partial Fourier sums of a 2D function $f$ with polar sampled data points, or equivalently by Theorem \ref{main-theorem}, convolutions of $f$ with $\mathcal K$. Observe the difference between the case when $f$ is real and when $f$ is a random complex signal.}
 \label{fig: phase}
\end{figure}

\subsection{Formal Characterization of Fourier sums of Random Complex Signals}

We proceed more formally with work related to \cite{munson1984image}. Consider $f\in \C^N$, where the phases of $f$ are independent uniform random variables.  Then we can write $f_j = e^{i\phi_j} |f_j|$, where $\phi_j$ are uniformly distributed over the interval $[-\pi , \pi)$.  Then the $k^{th}$ discrete Fourier coefficient is given by
\begin{equation}\label{fcoef1}
\hat f_k = \sum_{j=0}^{N-1} |f_j| e^{i\phi_j} e^{-i\frac{2\pi}{N}  k j}
\end{equation}
We may evaluate the magnitude of this coefficient as
\begin{equation}\label{fcoef2}
|\hat f_k|^2 = \sum_{j_1=0}^{N-1} \sum_{j_2=0}^{N-1} |f_{j_1}||f_{j_2}| e^{i(\phi_{j_1}-\phi_{j_2})} e^{-i\frac{2\pi}{N}k (j_1 - j_2)} .
\end{equation}
Under the assumption that $\phi_j$ are uniformly distributed, it is easy to see that the expected value of $e^{i(\phi_{j_1}-\phi_{j_2})}$ is given by
\begin{equation}\label{fcoef3}
\mathbb E\left(e^{i(\phi_{j_1}-\phi_{j_2})}\right) = 
\begin{cases}
0 & \mbox{if } j_1 \neq j_2\\
1 & \mbox{if } j_1 = j_2
\end{cases},
\end{equation}
which leads to 
\begin{align*}
\mathbb E \left(|\hat f_k|^2 \right) 
& = \sum_{j_1=0}^{N-1} \sum_{j_2=0}^{N-1} |f_{j_1}||f_{j_2}| \mathbb E \left(e^{i(\phi_{j_1}-\phi_{j_2})} \right) e^{-i\frac{2\pi}{N}k (j_1 - j_2)} \\
& = \sum_{j=0}^{N-1} |f_j|^2 = ||f ||_2^2.
\end{align*}
Thus the expectation of magnitude of the coefficients is independent of the frequency and they do not decay, which can be easily tested numerically.  This concept is contrary to conventional understanding of the behavior of Fourier transforms of smooth functions, and indeed we may consider signals with such random phase values as very "rough."

To expand upon this further, let us consider again the result from Proposition \ref{prop:1}.  In proving (\ref{dirichlet-off-2}), we used the continuous definition of $\hat f_k$.  However, even if we define $\hat f_k$ using a DFT, i.e.
\begin{equation}\label{DFT-def}
 \hat f_k = \sum_{j=0}^{N-1} f_j e^{-i\frac{2 \pi}{N} jk},
\end{equation}
then we would arrive at essentially the same conclusion, where the convolution $*$ is instead the discrete convolution.  More precisely, minor modifications to the proof show that
\begin{equation}\label{IDFT}
 S_{K_1,K_2} f_m = N^{-1} \sum_{k=K_1}^{K_2} \hat f_k e^{i\frac{2\pi}{N} mk}, \quad \text{for} ~ m=0,1,\dots, N-1,
\end{equation}
and 
\begin{equation}\label{dirchlet-off-4}
  S_{K_1,K_2}f = f*g, ~ \text{where} ~ g_m = N^{-1} e^{i\frac{2\pi}{N} K_c m} D_{B/2} \left(\frac{2\pi}{N} m\right),
\end{equation}
with $K_c$ and $B$ as in Proposition \ref{prop:1}.
This in essence is a simplified calculation needed to show the main result from Munson's original paper \cite{munson1984image}.  From here, writing the $N$ point discretized Dirichlet kernel as $D_{B/2}^N(x) = D_{B/2}(2\pi x /N)$ leads to
\begin{align*}
\left| S_{K_1,K_2} f_m \right|^2 
& = N^{-2} \left| \sum_{j=0}^N |f_j| e^{i\phi_j} e^{i\frac{2\pi}{N}K_c (m-j)} D_{B/2}^N (m-j) \right|^2 \\
& = N^{-2} \sum_{j_1,j_2=0}^{N-1} |f_{j_1}||f_{j_2}| e^{i(\phi_{j_1}-\phi_{j_2})}e^{i\frac{2\pi}{N}K_c (j_1-j_2)} D_{B/2}^N(m-j_1) D_{B/2}^N(m-j_2) 
\end{align*}
Taking the expected value of this expression and using (\ref{fcoef3}) leads to
\begin{equation}
\mathbb E\left( \left| S_{K_1,K_2} f_m \right|^2 \right) 
= N^{-2} \sum_{j=0}^{N-1} |f_j|^2 \left(D_{B/2}^N(m-j) \right)^2 = N^{-2} \left( |f|^2 * |D_{B/2}^N|^2\right)_m .
\end{equation} 
Notice once again this expression depends only on the bandwidth $B= K_2 - K_1$ and is independent of the central frequency $K_c= (K_1+K_2)/2$.  Hence the random phases are somewhat "helpful" in providing some resolution even when we only have high frequency information.  A formal summary of what we have proven is given below.

\begin{theorem}
 Let $f\in \C^N$, $K_1 , K_2 \in \Z$, and let $\phi_j = \arg (f_j)$ be independent uniformly distributed random variables on the interval $[-\pi , \pi)$.  Define $K_c = (K_1 + K_2)/2$ and $B = K_2 - K_1$, and consider the partial Fourier sum $S_{K_1 , K_2} f$ as in (\ref{IDFT}).  Then the expected value of the squared magnitude the partial Fourier sum is dependent only on the bandwidth $B$ and is given by
 \begin{equation}
 \mathbb E\left( \left| S_{K_1,K_2} f_m \right|^2 \right) 
= N^{-2} \left( |f|^2 * |D_{B/2}^N|^2\right)_m,
 \end{equation}
 for $m= 0 , 1 , \dots , N-1$.

\end{theorem}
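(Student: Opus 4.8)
The plan is to follow verbatim the discrete computation carried out in the paragraphs preceding the statement, now organized as a self-contained argument. First I would invoke the discrete analogue of Proposition \ref{prop:1}, namely equations (\ref{IDFT}) and (\ref{dirchlet-off-4}): the offset partial Fourier sum of $f\in\C^N$ (with $\hat f_k$ the DFT coefficients in (\ref{DFT-def})) is the discrete convolution $S_{K_1,K_2}f = f*g$ with $g_m = N^{-1} e^{i\frac{2\pi}{N}K_c m} D_{B/2}(\tfrac{2\pi}{N}m)$. Writing $D_{B/2}^N(m) = D_{B/2}(2\pi m/N)$ and $f_j = |f_j| e^{i\phi_j}$, this gives the closed form $S_{K_1,K_2}f_m = N^{-1}\sum_{j=0}^{N-1} |f_j| e^{i\phi_j} e^{i\frac{2\pi}{N}K_c(m-j)} D_{B/2}^N(m-j)$.

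Next I would square the modulus by multiplying this expression with its complex conjugate, turning the single sum over $j$ into a double sum over $j_1,j_2$. The phase factors combine into $e^{i(\phi_{j_1}-\phi_{j_2})}e^{i\frac{2\pi}{N}K_c(j_1-j_2)}$, while the Dirichlet factors contribute $D_{B/2}^N(m-j_1)D_{B/2}^N(m-j_2)$ (these are real, so no conjugation subtlety arises). Then I would take the expectation, pass it through the finite double sum by linearity, and apply (\ref{fcoef3}): $\mathbb E\big(e^{i(\phi_{j_1}-\phi_{j_2})}\big)$ vanishes unless $j_1=j_2$, in which case it equals $1$. This collapses the double sum to the diagonal, killing the $K_c$-dependent exponential entirely, and leaves $\mathbb E(|S_{K_1,K_2}f_m|^2) = N^{-2}\sum_{j=0}^{N-1} |f_j|^2\big(D_{B/2}^N(m-j)\big)^2$, which is precisely $N^{-2}\big(|f|^2 * |D_{B/2}^N|^2\big)_m$, giving the claimed formula and its manifest independence of $K_c$.

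The only genuine point requiring care — and thus the main (mild) obstacle — is justifying the interchange of expectation and summation together with the independence input; since the sum is finite this is immediate, but one should be explicit that the $\phi_j$ being independent is what makes $\mathbb E(e^{i\phi_{j_1}}\overline{e^{i\phi_{j_2}}}) = \mathbb E(e^{i\phi_{j_1}})\mathbb E(e^{-i\phi_{j_2}}) = 0$ for $j_1\neq j_2$, using $\mathbb E(e^{i\phi_j}) = \tfrac{1}{2\pi}\int_{-\pi}^{\pi} e^{i\phi}\,\rmd\phi = 0$ for the uniform distribution. Everything else is the bookkeeping already displayed in the text, so the proof can simply reference Proposition \ref{prop:1} (in its discrete form) and equation (\ref{fcoef3}) and then exhibit the two displayed lines above.
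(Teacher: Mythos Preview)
Your proposal is correct and follows essentially the same approach as the paper: the argument in the paragraphs preceding the theorem is precisely the convolution representation (\ref{dirchlet-off-4}), expansion of $|S_{K_1,K_2}f_m|^2$ into a double sum, application of (\ref{fcoef3}) to collapse to the diagonal, and identification of the surviving single sum as the stated discrete convolution. Your added remark that the Dirichlet factors are real and the explicit justification of (\ref{fcoef3}) via independence and $\mathbb E(e^{i\phi_j})=0$ are welcome clarifications, but the overall route is identical.
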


\subsection{Heuristic Extension to Continuous Setting}
We showed in the previous section that assuming random phases on a discrete signal results in the Fourier coefficients with expected magnitudes that are independent of the frequencies.  These ideas do not seamlessly extend to continuous signals (which is what is inevitably sampled in the data), especially since we cannot assume the phases of the underlying function are completely random from one point in the domain to the next (such a function would not be even Lebesgue integrable).  As an alternative, we assume that the phase values of the function vary so rapidly (e.g. at a microscopic level \cite{pinel2013electromagnetic}) that from one pixel to the next the phases are essentially random.  

With this in mind, we write a 1D complex signal $f$ as $f(x) = |f(x)| e^{i\phi(x)}$, where $\phi(x)$ is again a random variable.  In this case though, we assume there is some covariance function, $\text{cov}(\phi(x),\phi(y)) = \Phi(|x-y|)$, which decays possibly very rapidly.  Then using (\ref{dirichlet-off-2}) we compute the magnitude of the offset partial Fourier sum as
\begin{equation}\label{eq:cont-case}
|S_{K_1 , K_2 } f(x)|^2 = \int_{-\pi}^\pi \int_{-\pi}^\pi |f(y_1)| |f(y_2)| e^{i(\phi(y_1) - \phi(y_2))} G_{K_c , B} (x - y_1) \overline{G_{K_c , B}(x - y_2)} \, dy_1 \, dy_2
\end{equation}

To proceed, consider a very simple case where the covariance relationship is approximated by
\begin{equation}
\mathbb E \left[ e^{i(\phi(x) - \phi(y))}\right] \approx \begin{cases}
         1, & |x-y|<\delta\\
         0, & \text{otherwise}
        \end{cases},
\end{equation}
for some very small $\delta$.  Then taking the expected value of (\ref{eq:cont-case}) leads to
\begin{equation}
\mathbb E\left[ |S_{K_1 , K_2 } f(x)|^2 \right] \approx \int_{-\pi}^\pi \int_{y_2 - \delta}^{y_2 + \delta} |f(y_1)| |f(y_2)| G_{K_c , B} (x - y_1) \overline{G_{K_c , B}(x - y_2)} \, dy_1 \, dy_2.
\end{equation}
Finally, if $\delta$ is small enough, we suppose that for $y_1 \in (y_2 - \delta, y_2 + \delta)$ we have $|f(y_1)| \approx |f(y_2)| $ and $G_{K_c,B} (y_1) \approx G_{K_c , B} (y_2)$,\footnote{Observe that this assumption is okay so long as the central frequency $K_c$ is not too large relative to $\delta$.} hence
\begin{equation}
\mathbb E\left[  |S_{K_1 , K_2 } f(x)|^2 \right] \approx 2 \delta \int_{-\pi}^\pi |f(y)|^2 |G_{K_c, B}(x-y)|^2 \, dy = 2\delta |f|^2 * |G_{K_c , B}|^2 (x) = 2\delta |f|^2 * |D_{B/2}|^2 (x).
\end{equation}
Therefore we conclude, with our assumptions, that in continuous case the resulting reconstruction from partial Fourier sums of random complex signals depends primarily on the bandwidth, independently of the central frequency.  These exact arguments are extended naturally to 2D partial Fourier sums resulting from SAR sampling geometries with the aid of Theorem \ref{main-theorem}.

\section{Regularization}\label{sec: reg}
The partial Fourier sums as presented in Section \ref{sec:filter} are usually the most computationally convenient approach to yield quick approximations of functions from such Fourier data.  Inherent to these approximations is the assumption that the Fourier coefficients outside of the known band of data are zero, which may result in ringing or Gibbs artifacts.  Filtering methods, as briefly discussed at the beginning of section \ref{sec:filter}, is an inexpensive way to alleviate some of these unwanted artifacts simply by weighting or smoothing the coefficients in the Fourier sum in a particular way.  However, this weighting inevitably results in a loss of resolution.  This should be clear, since the filtering effectively changes the values of the known coefficients to incorrect values with the introduction of these weights.

Alternatively, one can work around the inherent zeroing of the unknown coefficients present in partial sums by introducing a numerical method to ``fill in'' for the missing coefficients in a natural way.  This can be done by implementing regularizations such as total variation (TV) that yields an \emph{optimal} solution given the known coefficients, making no assumptions on the unknown coefficients. In other words, finding a \emph{best} solution over the set of all possible solutions that satisfy the known coefficients.  In doing this we do not lose any information in the coefficients as done with filtering, while still opening the possibility to reduce speckle and ringing artifacts.  These methods are described in detail in the proceeding section.

Before doing so, we illustrate these ideas in Figure \ref{fig: 1D}, where the periodic function
\begin{equation}
 f(x) =
 \begin{cases}
      2x + 2 ,& x \in [-1/2,0) \\
      2x, & x\in [0, 1/2).
  \end{cases},
\end{equation}
is reconstructed from its truncated Fourier coefficients.  These coefficients are first computed analytically as 
$$
\hat f_k = \int_{-1/2}^{1/2} f(x) e^{-i2\pi kx} \, \rmd x = \frac{i}{\pi k},
$$
for $k\neq 0$.  For the reconstruction we used the values $|k|\le 75$.

Clearly, this partial sum results in the Gibbs effect at the discontinuity, and the filtered sum smooths this region over.  However, the TV regularized solution removes the Gibbs ringing without over-smoothing the solution.  Another explanation is again given by observing the magnitude of the Fourier coefficients in the right panel.  Obviously the partial sums and filtering results in coefficients of zero outside the known band.  Using the TV regularization only enforced the solution to agree on the known coefficients, and the unknown coefficients from the reconstructed solution (green curve) are similar to the original coefficients. 
\begin{figure}
 \centering
 \includegraphics[width=.8\textwidth]{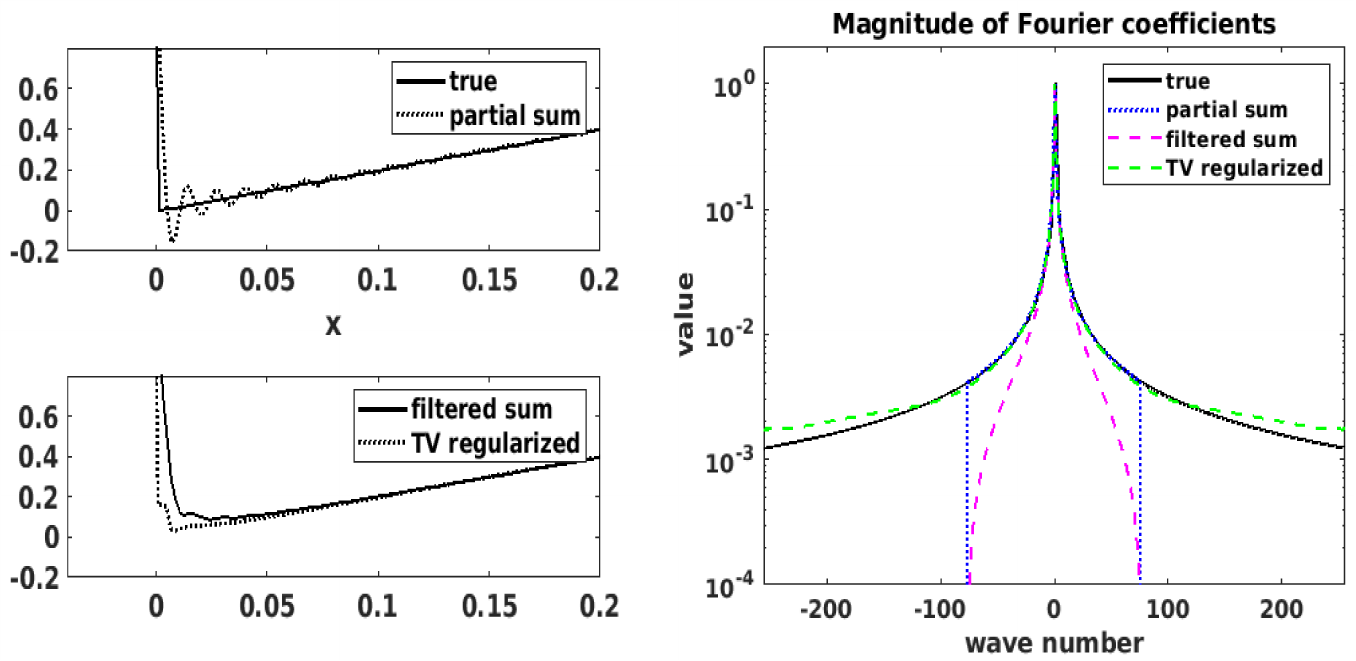}
 \caption{A one-dimensional example of reconstruction from partial Fourier data.}
 \label{fig: 1D}
\end{figure}

\subsection{General Description of Regularization for SAR}
Consider the reconstruction of a discretized image or signal $\bff$ from some Fourier or SAR data set denoted by $\hat \bff= \{ \hat f_{k,\theta } \}_{k,\theta}$ .  Let $\mathbf F$ be the DFT operator that maps $\bff$ onto $\hat \bff$, where the digital frequencies needed for $\mathbf F$ may be determined from the SAR acquisition geometry and pixel size as described in section \ref{sec:freqs}.  Moreover, assume that matrix-vector multiplications with the operator $\mathbf F$ may be computed efficiently by say a nonuniform FFT \cite{Greengard, Andersson, 1166689}, opening the door for the possiblity of iterative reconstruction methods for $\bff$\cite{sanders2017composite}. Also note that for an even more efficient approach, the data could instead be first regridded onto the uniform mesh in Fourier space generating a new data vector so that simply FFTs can be applied.  A regridding technique will use the same concepts that the nonuniform FFT methods use \footnote{For details on the implementation of these ideas for SAR, see for example \cite{sanders2017composite} as well as the code associated with this document.}.

With all of this in mind, the general ideal inverse problem that we are interested in solving, which possibly avoids zeroing of the unknown Fourier values, is written by
\begin{equation}\label{eq: reg}
\bff^* = \arg \min_{\bff} \Big\{  \| \mathbf F  \bff - \hat \bff \|_2^2 + \lambda H(\bff) \Big\}  .
\end{equation}
Here, $H$ is a regularization or prior assumption that incorporates prior knowledge about the behavior of $\bff$ to encourage \emph{favorable} or smooth solutions, and $\lambda$ is an important parameter to balance the data fitting and regularization.  A classical choice is the Tikhonov regularizer of the form $H(\bff ) = \| \bff \|_2^2$.  Many variants of Tikhonov regularization exist, for instance we may first take the discrete derivative of $\bff$ for a regularization term $H(\bff ) =\| \mathbf D\bff \|_2^2$, where
\begin{equation}\label{eq: Dmatrix}
\mathbf D = 
\left( 
 \begin{array}{ccccc}
  -1 & 1 & 0 & \dots & 0\\
  0 & -1 & 1 & \dots & 0 \\
  0 & 0 & -1 & \dots & 0 \\
  \vdots & \vdots & \vdots & \ddots & \vdots \\
  0 & 0 & \dots & -1 & 1
 \end{array}
 \right) .
\end{equation}
With this type of regularization the minimizer to (\ref{eq: reg}) has an analytical solution given by
\begin{equation}\label{eq: Tik}
 \bff^* = (\overline{\mathbf F}\T \mathbf F + \lambda \mathbf D\T \mathbf D)^{-1} \overline{\mathbf F}\T \hat \bff.
\end{equation}

More recently $\ell_1$ regularizers have been used, such as TV regularization.  For TV regularization in 1D the regularization term $H$ is given by
\begin{equation}
TV(\bff ) = \| \mathbf D\bff \|_1 = \sum_{j=1}^{N-1}| \bff_{j+1} - \bff_j | .
\end{equation}
The TV norm and Tikhonov regularizations may be naturally extended to problems with dimensions higher than 1D \cite{bregman}, but for simplicity we will continue our descriptions in the 1D case.

For SAR, a small total variation prior is actually not suitable as suggested by the content in section \ref{sec:rand}.  In particular, we can presume that the magnitude of the underlying scene has a small total variation, but not the complex values due to the inherent randomly varying phases on the pixels.  Therefore a more appropriate regularization prior may be $TV (| \bff |)$.  However, this cannot be implemented efficiently since the absolute value as an operator is nonlinear, and typical $\ell_1$ optimization algorithms are designed for linear operators inside of the $\ell_1$ norms.

As an alternative it has been shown effective to instead first determine an approximate solution $\bff^0$ \cite{sanders2017composite}.  Using this solution, we define a diagonal matrix containing the phases of $\bff^0$ by $\Theta = \text{diag}\{ e^{-i \arg\left( \bff_{j}^0 \right) }\}_{j=1}^N$, and it follows that $| \bff^0 | = \Theta \bff^0 $.  Hence a suitable regularization involving linear operators is given by 
\begin{equation}
TV(\Theta \bff ) =\| \mathbf D \Theta \bff \|_1 = \sum_{j=1}^{N-1} \left| e^{-i \arg\left( \bff_{j+1}^0 \right)} \bff_{j+1} - e^{-i \arg\left( \bff_{j}^0 \right) } \bff_j \right| \approx \sum_{j=1}^{N-1} \left| \, |\bff_{j+1}|  - | \bff_j | \, \right| 
\end{equation} 

Likewise, any general $\ell_1$ optimization with a linear operator $T$ can be adapted to the complex problem by using the regularization $\| T \Theta \bff \|_1$.  There is a mild difficulty associated with solving the optimization problem (\ref{eq: reg}) when $H$ is an $\ell_1$ prior due to the non differentiability of the $\ell_1$ norm.  However, a number of efficient methods have been developed to do so in recent years.  An overview of some of the main concepts is provided in the appendix, and the interested reader will find a sufficient literature devoted to this topic.  The most popular perhaps are the split Bregman method and the alternating direction method of multipliers (ADMM) \cite{bregman,Li2013}, and implementation of these tools within the SAR framework is given in \cite{sanders2017composite}.  

Finally we mention that regularization methods have been very well studied resulting in a huge number approaches and variations \cite{achim2003sar,bredies2010total}, as well as many interpretations \cite{steidl2004equivalence,kaipio2006statistical}.  For example, there are higher orders TV methods, wavelets, and k-space analysis of the methods, some of which is summarized and expanded on in recent work\cite{sanders2018multiscale}.

A final example on openly available SAR data sets is given in Figure \ref{fig: final-reg}, where we compare a simple inverse NUFFT solution to regularized solutions.  On the left is the GOTCHA data set \cite{Gotcha}, where TV regularization was used. On the right is the CV data \cite{dungan2010civilian}, also shown in Figure \ref{fig: example}.  In this case the regularization was just the $\ell_1$ norm of the image, $\| \bff \|_1$.  The NUFFT solutions clearly exhibit speckle and ringing artifacts, and these effects are significantly reduced in the regularized solutions, without degrading the true image features.  In the bottom row the corresponding logarithmic scale of the 2D FFTs of the images are shown.  The Fourier transforms of the NUFFT solutions clearly show us the location of the aliased Fourier data, and the dark regions on these images indicate where no data was located.  On the other hand, the Fourier transforms of the regularized solutions show us that in some sense are able to fill in these k-space regions void of data.
\begin{figure}
 \centering
 \includegraphics[width=1\textwidth]{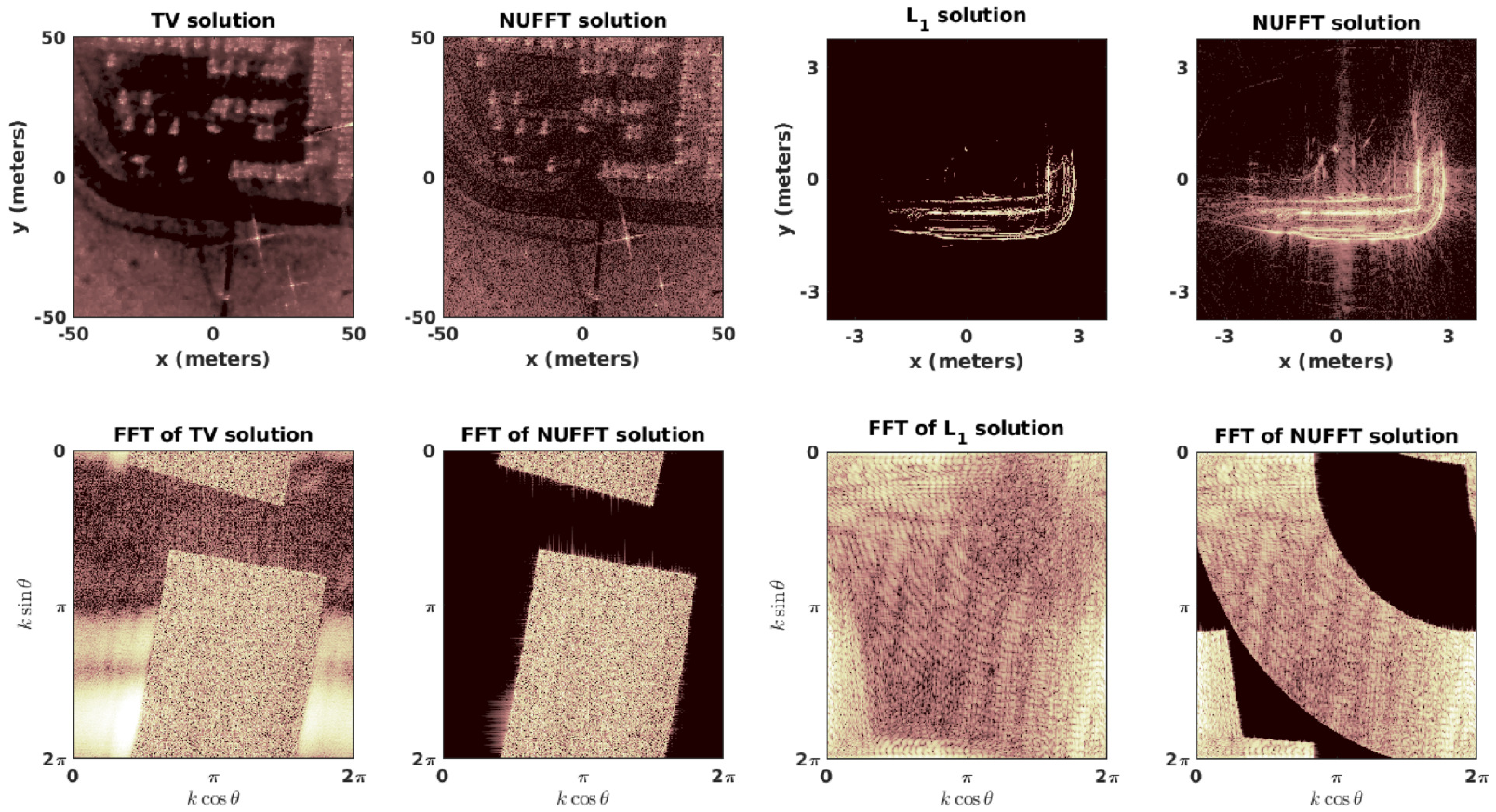}
 \caption{Comparison of regularized solutions with a standard inverse NUFFT solutions.  On the left is the GOTCHA data \cite{Gotcha} and the right is the CV data \cite{dungan2010civilian}.  The bottom row shows the corresponding Fourier transforms of the solutions, and indicate that the regularizations are able to fill in these regions of missing data in Fourier space.}
 \label{fig: final-reg}
\end{figure}

\section*{Acknowledgments}
This work is supported in part by the grants NSF-DMS 1502640 and AFOSR FA9550-15-1-0152.

\appendix
\section{Methods for Efficient $\ell_1$ Optimization}
Here we consider numerical solutions to the general $\ell_1$ optimization problem to 
\begin{equation}\label{eq: main-L1}
f^* = \arg\min_{f\in \C^N} \| A f- b \|_2^2 + \lambda \| T f\|_1,
\end{equation}
which is solved to obtain regularized solutions $f^*$ for imaging problems.  We let $T \in \C^{K\times N}$, $A \in \C^{M\times N}$, and $b\in \C^M$.  For SAR, the matrix $A$ is a discretized Fourier operator, $b$ is the SAR echo data, and $T = D\Theta$, where $\Theta$ is a diagonal matrix constructed from the phase estimates of the pixels of $f^*$ and $D$ is say a finite difference matrix, all of which is described in the main text.

The inherent difficulty in this optimization problem is due to several factors: primarily the large scale of the dimensions $M$ and $N$ for two and three-dimensional imaging problems combined with the non-differentiability of the $\ell_1$ norm.  In this section we outline the popularized method known as the alternating direction method of multipliers (ADMM) for solving (\ref{eq: main-L1}).  For more detailed careful convergence analysis of these and related methods, see for instance \cite{Li2013,wu2010augmented}.  Of particular note provided here however, is the extension of some of these ideas for complex valued signals.  The MATLAB software for the optimization methods described below is openly available \cite{toby-web, sanders2018multiscale}.


First, let us rewrite the optimization problem in an equivalent form
\begin{equation}\label{eq: l1c}
  \min_{f,g} \| Af - b \|_2^2 + \lambda \|g\|_1 \quad s.t. \quad Tf = g.
\end{equation}
We can consider (\ref{eq: l1c}) as a constrained optimization problem.  Therefore we may convert it to an unconstrained optimization problem that approximates the constrained problem using an augmented Lagrangian functional \cite{hestenes1969multiplier,Li2013}.  This functional takes the form
\begin{equation}\label{eq: l1ag}
 \mathcal L (f,g ,\sigma ) = \| Af-b\|_2^2 + \lambda \| g \|_1 + \frac{\beta}{2} \|Tf - g \|_2^2 - \sigma^H (Tf - g),
\end{equation}
where $\sigma \in \C^K$ is a Lagrangian multiplier and the superscript $H$ will be used to denote the conjugate transpose.  If $\sigma$ is updated (or chosen) appropriately, as well as $\beta$, then the minimizer to (\ref{eq: l1c}) becomes a local minimum to (\ref{eq: l1ag}) \cite{hestenes1969multiplier}.  In other words, an ordinary Lagrange multiplier method converts a constrained optimization problem into an equivalent unconstrained problem of finding a saddle point, the augmented Lagrangian functional converts this saddle point to a local minimizer.  Hence, gradient descent types of approaches may be used to find this minimizer.  While it may appear to the reader that this reformulation has made the minimization problem more complicated, the introduction of the splitting variable $g$ makes the problem much simpler to solve.  The general approach to minimizing $\mathcal L$ is by an alternating minimization over $f$, $g$, and $\sigma$ until convergence.

A basic outline for the iterations is given by the following, where $k$ denotes the iteration:
\begin{equation}\label{eq: iters}
 \begin{split}
  f^{k+1} &= \arg \min_f \mathcal L(f,g^k , \sigma^k) \\
  g^{k+1} & = \arg \min_g \mathcal L (f^{k+1},g,\sigma^k) \\
  \sigma^{k+1} & = \sigma^k - \beta (Tf^{k+1} - g^{k+1}) .
 \end{split}
\end{equation}
The updates on $\sigma$ are standard for augmented Lagrangian methods \cite{hestenes1969multiplier}.  The updates on $g$ are given by a simple formula described below in section \ref{sec: g}.  The updates on $f$ are typically given by a crude approximation that may be computed in a pragmatic way using a single gradient decent.  The reason for this is because computing the exact minimizer (within some tolerance) may require many inner loop iterations, and empirical evidence shows that a single step gradient approximation to be suitable for the alternating approach given by (\ref{eq: iters}), i.e. we only need to roughly solve for $f^{k+1}$.  A basic description for computing solutions to each subproblem in (\ref{eq: iters}) are given in the following subsections.

\begin{figure}
\centering
\includegraphics[trim={6cm 0cm 4cm 0cm},clip,width=1\textwidth]{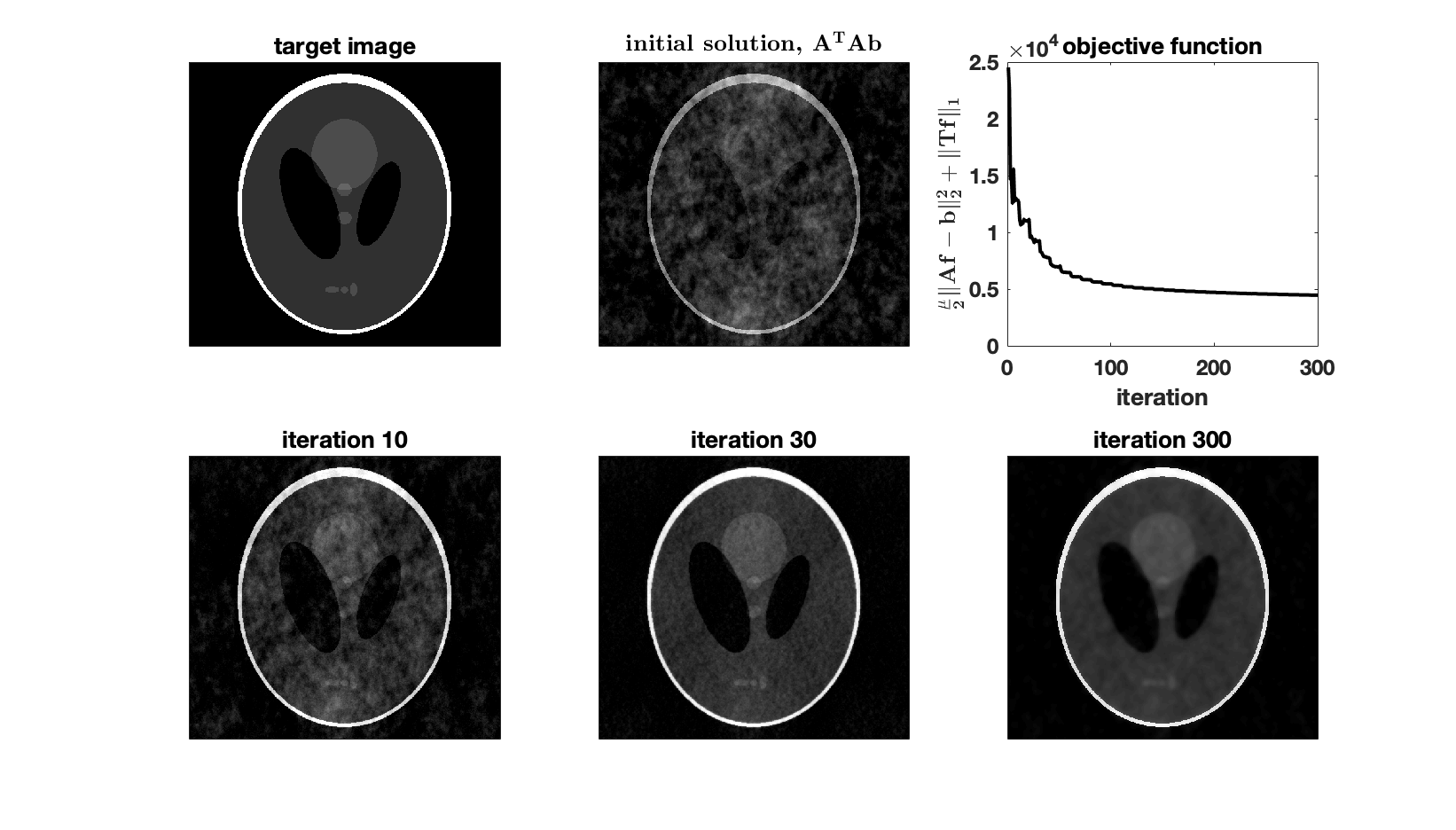}
\caption{A 2D example empirically demonstrating the convergence of the ADMM algorithm outlined by (\ref{eq: iters}).}
\label{fig: 2d}
\end{figure}
Before outlining these details, a simple example demonstrating the convergence of the algorithm described is presented Figure \ref{fig: 2d}.  The sampling matrix in this case takes the form $A = P\mathcal F$, where $\mathcal F$ is the unitary discrete Fourier transform and $P$ is a row selector matrix that randomly selects half the rows of $\mathcal F$.  In other words, $A$ is a partial Fourier transform.  The noise added to $b$ is mean zero i.i.d. complex Gaussian with an SNR of 2.  The figure shows the solution at different iterations, and a plot showing the value of the objective function (\ref{eq: main-L1}) as a function of the iteration.  For the most part, we observe that the algorithm iteratively yields solutions so that the objective function is monotone decreasing.


\subsection{Solution to the $g$ subproblem} \label{sec: g}
  The main benefit of the reformulation of (\ref{eq: main-L1}) as (\ref{eq: l1ag}) is that at any iteration, the minimizer to $g$ for any given set of all other variables is given by the exact shrinkage formula \cite{bregman,Li2013}
\begin{equation}\label{eq: updateg}
 g^{k+1} = \max \left(| Tf^{k+1} - \sigma^k/\beta| - \lambda /\beta ,0 \right) * \text{sign} \left( Tf^{k+1} - \sigma^k/\beta\right) .
\end{equation}
Hence, the issue of the non-differentiability of the $\ell_1$ norm is circumvented with the splitting variable and resulting shrinkage formula.  For the real valued case, this shrinkage formula is well-known.  We are not aware of a formal proof of the shrinkage formula in the complex case.  We provide it here, as a direct consequence of the following proposition.

 \begin{proposition}
  Consider the functional 
  $$
  F(z) = \frac{
  \beta}{2} |z-z_0|^2 + |z| + \mathrm{Re}(\overline \sigma z)  ,
 $$
 for $z,z_0,\sigma\in\C$ and $\beta>0$.  Then the minimizer of $F$ is given by
 $$
 z = \max\left(|z_0-\sigma/\beta| - 1/\beta,0\right) * sign(z_0-\sigma/\beta).
$$
\end{proposition}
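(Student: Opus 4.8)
The plan is to minimize $F(z)$ by a direct variational argument. First I would absorb the linear term into the quadratic one by completing the square: writing $\sigma/\beta =: s$, we have $\frac{\beta}{2}|z-z_0|^2 + \mathrm{Re}(\overline{\sigma} z) = \frac{\beta}{2}|z - (z_0 - s)|^2 + C$, where $C$ depends only on $z_0$ and $\sigma$ (not on $z$). Hence minimizing $F$ is equivalent to minimizing $\tilde F(z) = \frac{\beta}{2}|z - w|^2 + |z|$ with $w := z_0 - \sigma/\beta$. This reduces the claim to the standard complex shrinkage/soft-thresholding identity, and it is exactly the form in which the proposition's answer is stated.

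Next I would reduce to a one-dimensional problem by a phase argument. For fixed $|z| = r \ge 0$, the term $|z|$ is constant, and $\frac{\beta}{2}|z-w|^2 = \frac{\beta}{2}(r^2 - 2\,\mathrm{Re}(\overline{w} z) + |w|^2)$ is minimized over the circle $|z| = r$ by choosing $z$ to have the same argument as $w$, i.e. $z = r\,\mathrm{sign}(w)$ (and if $w = 0$ any phase works and the minimizer will turn out to be $z = 0$). So it suffices to minimize over $r \ge 0$ the scalar function $\varphi(r) = \frac{\beta}{2}(r - |w|)^2 + r$. I would then differentiate: $\varphi'(r) = \beta(r - |w|) + 1$, which vanishes at $r^* = |w| - 1/\beta$. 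Since $\varphi$ is strictly convex, the constrained minimizer over $r \ge 0$ is $r = \max(r^*, 0) = \max(|w| - 1/\beta, 0)$. Combining with the phase choice gives $z = \max(|w| - 1/\beta, 0)\,\mathrm{sign}(w) = \max(|z_0 - \sigma/\beta| - 1/\beta, 0)\,\mathrm{sign}(z_0 - \sigma/\beta)$, as claimed.

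The only mild subtlety — and the step I would be most careful about — is the non-differentiability of $|z|$ at the origin, i.e. justifying the optimality when $r^* \le 0$ so that the minimizer is $z = 0$. This is handled by the reduction to $\varphi(r)$ on $r \ge 0$: $\varphi$ is a smooth strictly convex function of the real variable $r$ on $[0,\infty)$, so the standard one-sided first-order condition applies and no subdifferential machinery is needed. I would also remark that the minimizer is unique because $\tilde F$ is strictly convex in $z$ (sum of a strictly convex quadratic and a convex term), which makes the phase-then-radius decomposition legitimate. The shrinkage formula for the $g$-subproblem in (\ref{eq: updateg}) then follows by applying the proposition coordinatewise to $F(g_\ell) = \frac{\beta}{2}|g_\ell - (Tf^{k+1})_\ell|^2 + |g_\ell|\cdot(\lambda\beta^{-1}\beta) + \ldots$ — more precisely, to $\frac{\beta}{2}|g_\ell - (Tf)_\ell|^2 + \lambda|g_\ell| - \mathrm{Re}(\overline{\sigma_\ell} g_\ell)$, after rescaling by $\lambda$, which matches the stated form with the threshold $\lambda/\beta$.
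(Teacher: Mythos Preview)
Your argument is correct and genuinely different from the paper's. The paper works in real coordinates: it writes $z=x+iy$, computes the two partial derivatives of $F$ assuming $|z|\neq 0$, sets them to zero, and algebraically eliminates $x$ and $y$ to obtain $|z|=|z_0-\sigma/\beta|-1/\beta$, then substitutes back; the boundary case $|z|=0$ is dispatched with a one-line remark. You instead complete the square to reduce to the proximal problem $\tfrac{\beta}{2}|z-w|^2+|z|$ with $w=z_0-\sigma/\beta$, then use a polar decomposition---optimize the phase first (forcing $z$ to share the argument of $w$), then minimize the resulting strictly convex scalar function $\varphi(r)=\tfrac{\beta}{2}(r-|w|)^2+r$ on $[0,\infty)$. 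Your route is coordinate-free and explains the structure of the answer as the prox of the modulus evaluated at $w$; it also handles the non-differentiable point $z=0$ more carefully via convexity and the one-sided first-order condition, whereas the paper's treatment of that case is only sketched. The paper's calculation, by contrast, is pure calculus and requires no appeal to convexity. One small remark: in your closing paragraph connecting to the shrinkage formula for $g$, the sign on the linear term should be $+\mathrm{Re}(\overline{\sigma_\ell}\,g_\ell)$ (from $-\sigma^H(Tf-g)$), not $-\mathrm{Re}(\overline{\sigma_\ell}\,g_\ell)$; with that correction the rescaling goes through and yields the threshold $\lambda/\beta$ exactly as stated.
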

\begin{proof}
Write $\sigma = \sigma_1 + i\sigma_2$, $z = x+iy$, and $z_0 = x_0 + iy_0$.  Supposing $|z|\neq 0$, then the partial derivatives of $F$ are given by
\begin{equation}
\begin{split}
F_x(z) = \beta(x-x_0) + \sigma_1 + x/|z| \\
F_y(z) = \beta(y-y_0) + \sigma_2 + y/|z| .
\end{split}
\end{equation}
Setting these to zero and rearranging yields
\begin{equation}\label{xy-shrink}
\begin{split}
x & = \frac{|z|( x_0-\sigma_1/\beta)}{|z|+1/\beta}\\
y & = \frac{|z| (y_0-\sigma_2/\beta)}{|z|+1/\beta}.
\end{split}
\end{equation}
Squaring these two equations and adding together we find $|z|$ to be given by
\begin{equation}\label{z-shrink}
|z| = |z_0 - \sigma/\beta| - 1/\beta.
\end{equation}
Observe this only makes sense for $|z_0 - \sigma/\beta| \ge 1/\beta$, and indeed it can be deduced that in the alternate case the solution is $|z| = 0$.  Substituting (\ref{z-shrink}) into (\ref{xy-shrink}) and combining this with the case $|z| = 0$ completes the proof.
\end{proof}

\subsection{Approximation to the $f$ subproblem}
For the minimization over $f$, notice that the functional just includes linear and quadratic terms, therefore a gradient decent method can be efficiently implemented so long as the gradient can be computed efficiently.  The gradient of $\mathcal L$ with respect to $f$ may be derived as
\begin{equation}\label{eq: grad}
 \nabla_f \mathcal L (f , g , \sigma ) = 
 2  A^H (Af-b ) + \beta T^H ( Tf - g ) - T^H \sigma ,
\end{equation}
and therefore updates on $f$ take the form 
\begin{equation}\label{eq: updatef}
 f^{k+1} =  f^k - \tau^k \nabla_f \mathcal L (f^k , g , \sigma ) 
\end{equation}
for the current $g$ and $\sigma$, and where $\tau^k$ is some appropriate step length.  We leave the appropriate choice of $\tau$ as an exercise for the reader, though a spectral step length is known to be effective \cite{barzilai1988two}.  In \cite{li2010efficient} the spectral step is used along with backtracking.

\subsection{Computational Cost and Efficient matrix-vector multiplication}
Observe that the speed of the algorithm described essentially relies on efficient computation of (\ref{eq: grad}) and (\ref{eq: updateg}) at each iteration.  Therefore the main computational cost will be due to matrix-vector products, and it is critical to have efficient methods for matrix vector multiplication with $A$, $T$, and their adjoints.  

Suppose for simplicity that $A$ and $T$ are square operators and $f\in \C^N$, in which case a full matrix vector product cost is $O(N^2)$ (not to mention the memory needed to store such a matrix).  However, the operator $T$ and its adjoint can typically implemented in much less since it is usually a sparse operator with mostly zeros.  For example, a finite difference operation requires a cost of only $O(N)$.  Even wavelet operators, which as matrix operators are somewhat sparse, can be computed using fast wavelet transforms and FFTs in $O(N\log N)$ \cite{cody1992fast} .  

The operator $A$ on the other hand can generally be more cumbersome.  For SAR with where the data are non-equispaced Fourier coefficients and hence the matrix $A$ is a Fourier operator, a direct calculation of the matrix vector product with the full matrix would return us back to $O(N^2)$ iterations.  In addition, FFT operations cannot be implemented directly (typically an order $N\log N$ operation).  To save us from this computational burden however are nonuniform FFT (NUFFT) methods, which can be carried out in the same order of computing time as FFTs. For extensive details on these techniques and their developments see for instance \cite{1166689,Greengard,Averbuch2006145}, where \cite{1166689} also provides corresponding software.  A summary of some of these methods in the context of SAR is given in \cite{Andersson}.  

As an alternative to using NUFFTs at each iteration, one may first re-grid (or interpolate) the nonuniform data $b$ onto a uniform grid to generate a modified data vector.  This requires an accurate re-gridding method, which usually requires many of the same ideas as NUFFTs.  Once the re-gridding is carried out, then typical FFTs may be applied for $A$. Our preference is to re-grid the data first, since empirical evidence suggests that there is no loss in accuracy, but there is a slight improvement to the speed of the algorithm.  For doing so, we have implemented the software provided by Fessler et al \cite{1166689}.

\section{Criteria for Optimal $\ell_1$ Solutions}
In this section we outline some of the analytical convergence properties and characterizations of the solutions to the $\ell_1$ optimization problem.  For additional details along these lines, see for instance \cite{wu2010augmented,tibshirani2013lasso}

For the augmented Lagrangian functional (\ref{eq: l1ag}), the necessary first order optimality conditions to guarantee a critical point are given by
\begin{equation*}
\begin{split}
& \nabla_f L(f,g,\sigma) = 0\\
& \nabla_g L(f,g,\sigma) \ni 0\\
& \nabla_\sigma (L(f,g,\sigma) = 0.
\end{split}
\end{equation*}
The second condition is set valued since the sub-differential is required due to the $\ell_1$ term, which in 1D is given by
$$
\frac{d}{dx}|x| = sign^*(x)=
 \begin{cases}
      -1 ,& x <0 \\
      1, & x>0\\
      [-1,1] & x=0
  \end{cases},
$$
where we are using $sign^*$ to denote the set valued sign function. Computing the last condition gives $Tf = g$, which simply implies the solution satisfies the original problem constraint in (\ref{eq: l1c}).  Computing the first two conditions leads to
\begin{equation*}
\begin{split}
& \mu A^H (Af-b) + \beta T^H (Tf-g) - T^H \sigma = 0\\
& sign^*(g) + \beta (g-Tf) + \sigma \ni 0.
\end{split}
\end{equation*} 
Observe further that this second condition implies
$$
\beta (g-Tf) + \sigma \in -sign^*(g)
$$
These two conditions are checked for a simple test problem shown in the top row of Figure \ref{fig: opt}, where the $\ell_1$ optimization algorithm outlined in section 1 was evaluated for 5000 iterations. For this test problem, the matrix $A \in \R^{400\times 500}$, $T$ is a second order finite difference operator, and the test signal is a sine curve.  The noise added to $b$ is mean zero i.i.d. Gaussian with an SNR of 5.  Observe that the two conditions are approximately satisfied.

Returning to the original problem (\ref{eq: main-L1}), the first order necessary and sufficient condition to guarantee a given solution is a minimizer may be computed as
\begin{equation}\label{optim-1}
\mu A^H (Af-b) + T^H sign^*(Tf) \ni 0.
\end{equation}
To explore this condition further, let $S = \{ j \, | \, (Tf)_j\neq 0\}$, $R=S^c$, and $T_S$ denote $T$ containing only the rows from $S$ (similarly for $R$).  Then 
$$
T^H sign^* (Tf) = T_S^H sign(T_S f) + T_{R}^H sign^*(T_R f).
$$
Substituting this into (\ref{optim-1}) and rearranging (assuming $R \neq \varnothing$) yields
\begin{equation}
\left| (T_{R}^H)^+ \left(\mu A^H(Af-b) + T_S^H sign(T_Sf)   \right) \right| \le 1
\end{equation}
This condition can be more cumbersome to check in general, since we need to solve a least squares problem, $T_R^H x = y$, for $x$.  In some cases this pseudo-inverse can be computed analytically however, e.g. if $T$ is a circulant finite difference matrix then one can take advantage of the matrix Fourier diagonalization.  This condition is checked in the bottom left of Figure \ref{fig: opt} for the same test problem used to check the Lagrangian optimality conditions.  For computational reasons, the set $S$ was determined by $S  = \{ j \, |\, |(Tf)_j| >10^{-3} \}$.
\begin{figure}
\centering
\includegraphics[width=1\textwidth]{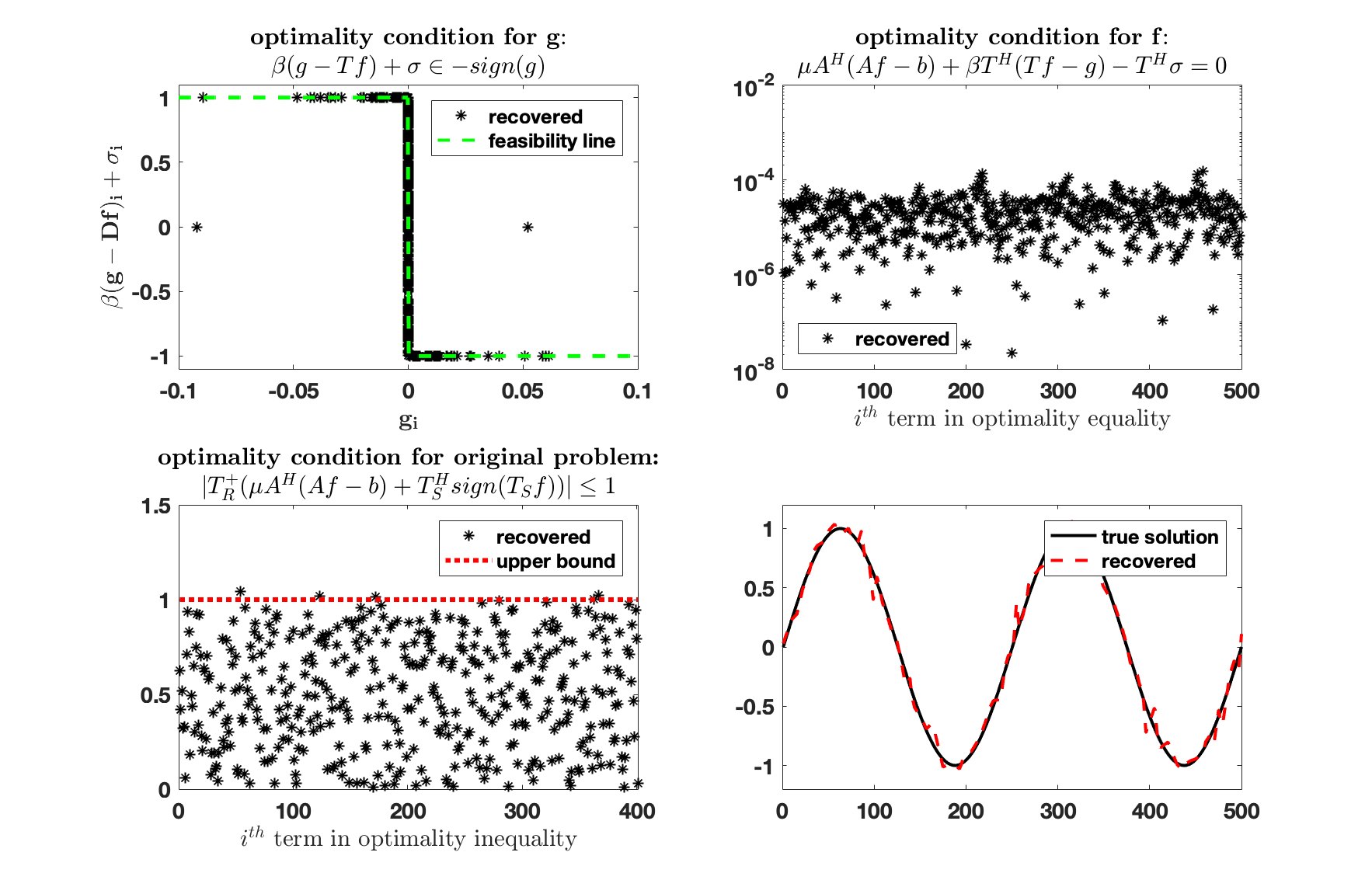}
\caption{Optimality conditions for the augmented Lagrangian function (\ref{eq: l1ag}) is shown in the top.  Optimality condition for original problem (\ref{eq: main-L1}) is shown in the bottom left.}
\label{fig: opt}
\end{figure}

\section{Review of Fourier Series, Continuous and Discrete}
Here we review some concepts from Fourier analysis and make some connections between the discrete and continuous settings.
\begin{definition}
Let $f\in L_2[0,2\pi]$, so that the Fourier series of $f$ converges (in the $L_2$ sense), and is defined by
\begin{equation}\label{fourier-series}
f(x) = \sum_{k\in \Z} \hat f_k e^{i k x } ,
\end{equation}
where $\hat f_k$ are called the Fourier coefficients and are given by
\begin{equation}\label{fourier-coef}
\hat f_k\big|_{[0,2\pi]} =  \langle f , e^{i k x }  \rangle = \frac{1}{2\pi} \int_0^{2\pi} f(x) e^{-ik x } \, dx.
\end{equation}
\end{definition}

Note that this definition could be equivalently given for any interval of length $2\pi$, e.g. $[-\pi,\pi)$. The classical Fourier series as defined by (\ref{fourier-series}) and (\ref{fourier-coef}) is extremely well-known and studied.  Likewise, many are also familiar with the related discrete Fourier transform (DFT), which is perhaps more widely recognized by the fast algorithm on which it is typically computed, the fast Fourier transform (FFT).  Unfortunately the bridge between the two is sometimes not clearly connected in certain disciplines, although there exists a very clear relationship.  Of course, one could infer that the DFT is simply the discretization of (\ref{fourier-series}) and (\ref{fourier-coef}) whenever $f$ is only defined over a discrete set of points.  Essentially this is correct, however we will try to explain several ways in which this could be interpreted.

To understand this a bit more, an alternative definition for the Fourier transform for a function over $[0,R]$ is first provided.
\begin{definition}
Let $f\in L_2[0, R]$, so that the Fourier series of $f$ converges (in the $L_2$ sense), and is defined by
\begin{equation}\label{fourier-series-alt}
f(x) = \sum_{k\in \Z} \hat f_k e^{i\frac{2\pi}{R} k x } ,
\end{equation}
where $\hat f_k$ are called the Fourier coefficients and are given by
\begin{equation}\label{fourier-coef-alt}
\hat f_k \big|_{[0,R]} =  \langle f , e^{i2\pi k x }  \rangle = R^{-1}  \int_0^R f(x) e^{-i\frac{2\pi}{R} k x } \, dx.
\end{equation}
\end{definition}

Defining a Fourier series on $[0,2\pi]$, $[0,R]$, or more generally $[a,b]$, essentially are all the same, by simply redefining the coordinate system.  Indeed, observe that (\ref{fourier-coef-alt}) can be written precisely as (\ref{fourier-coef}) by letting $\frac{N}{2\pi} y = x$:
\begin{equation}
\hat f_k \big|_{[0,R]} = \frac{1}{2\pi}  \int_0^{2\pi}  f\left( \tfrac{R}{2\pi} y \right) e^{-i k y } \, dy = \hat g_k \big|_{[0,2\pi]},
\end{equation}
where $g(x) =   f\left( \tfrac{R}{2\pi} x\right)$.
We point this out because we will sometimes change the convention we are using to simplify other aspects of a particular problem.  In this case we are using asymmetric intervals for convenience in matching the DFT, but in the main article we primarily use symmetric intervals, e.g. $[-R,R]$.

\begin{definition}
Let $f \in \C^N$.  Then the DFT of $f$, for $k=0,1,\dots ,N-1$, is defined by
\begin{equation}\label{DFT}
\hat f_k = \sum_{j=0}^{N-1} f_j e^{-i2\pi k \frac{j}{N}}.
\end{equation}
\end{definition}
This definition yields the orthogonal DFT where FFT algorithms can be applied, but more generally can be defined for any $k\in \R$ and can be computed by more recent developments of nonuniform FFT's (NUFFT).

With the way it is written in (\ref{DFT}), if we suppose the $N$-point function $f$ is on the interval $[0,2\pi]$, and therefore $f_j$ approximates $f$ at $\frac{2\pi j}{N}$, then we see that 
\begin{equation}\label{DFT-1}
\hat f_k \big|_{[0,2\pi ]}  = \frac{1}{2\pi} \int_0^{2\pi}  f(x) e^{-i k x } \, dx \approx N^{-1} \sum_{j=0}^{N-1} f\left( \frac{2\pi j}{N}\right ) e^{-i k \left( \frac{2\pi j}{N} \right) } ,
\end{equation}
In other words, if we consider our discrete signals to be defined over $[0,2\pi]$, then coefficients in the DFT can be considered a Riemann sum of the classical Fourier coefficients.

Alternatively, we may suppose our discrete signal is defined over $[0,R]$, i.e. the distance between the discrete points on the $N$ point mesh in $R/N$.  Then we then make the alternative interpretation of the DFT approximation of the continuous transform: 
\begin{equation}\label{DFT-2}
\begin{split}
\hat f_k \big|_{[0,R]} 
& = \int_0^R f(x) e^{-i\frac{2\pi}{R} k x } \, \rmd x \\
& \approx \sum_{j=0}^{N-1} f\left( j\tfrac{R}{N}\right) e^{-i\frac{2\pi}{R} k\left( j\frac{R}{N} \right)}
=  \sum_{j=0}^{N-1} f_j e^{-i\frac{2\pi}{N} k j }.
\end{split}
\end{equation}
The last line is a DFT and the previous approximation is its interpretation as a Riemann sum of the original integral.  In summary, the discrete approximations in (\ref{DFT-1}) and (\ref{DFT-2}) are the same as (\ref{DFT}), where there's simply a different interpretation of the location of $f_j$ on the Cartesian grid and the corresponding frequency.  Many of these ideas may be naturally extended to the 2D coordinate system, which the reader may find as a useful exercise for the contents of the main article.


\begin{thebibliography}{10}

\bibitem{achim2003sar}
A.~Achim, P.~Tsakalides, and A.~Bezerianos.
\newblock {SAR} image denoising via {B}ayesian wavelet shrinkage based on
  heavy-tailed modeling.
\newblock {\em IEEE Transactions on Geoscience and Remote Sensing},
  41(8):1773--1784, 2003.

\bibitem{Andersson}
F.~Andersson, R.~Moses, and F.~Natterer.
\newblock Fast {F}ourier methods for synthetic aperture radar imaging.
\newblock {\em IEEE Transactions on Aerospace and Electronic Systems},
  48(1):215--229, 2012.

\bibitem{Averbuch2006145}
A.~Averbuch, R.~Coifman, D.~Donoho, M.~Elad, and M.~Israeli.
\newblock Fast and accurate polar {F}ourier transform.
\newblock {\em Applied and Computational Harmonic Analysis}, 21(2):145 -- 167,
  2006.

\bibitem{barzilai1988two}
J.~Barzilai and J.~M. Borwein.
\newblock Two-point step size gradient methods.
\newblock {\em IMA journal of numerical analysis}, 8(1):141--148, 1988.

\bibitem{bredies2010total}
K.~Bredies, K.~Kunisch, and T.~Pock.
\newblock Total generalized variation.
\newblock {\em SIAM Journal on Imaging Sciences}, 3(3):492--526, 2010.

\bibitem{bruckner1997real}
A.~M. Bruckner, J.~B. Bruckner, and B.~S. Thomson.
\newblock {\em Real analysis}.
\newblock ClassicalRealAnalysis. com, 1997.

\bibitem{Gotcha}
C.~H. Casteel, L.~A. Gorham, M.~J. Minardi, S.~M. Scarborough, K.~D. Naidu, and
  U.~K. Majumder.
\newblock A challenge problem for 2{D}/3{D} imaging of targets from a
  volumetric data set in an urban environment.
\newblock 6568:65680D, 2007.

\bibitem{cheney2001mathematical}
M.~Cheney.
\newblock A mathematical tutorial on synthetic aperture radar.
\newblock {\em SIAM review}, 43(2):301--312, 2001.

\bibitem{cheney2009fundamentals}
M.~Cheney and B.~Borden.
\newblock {\em Fundamentals of radar imaging}, volume~79.
\newblock Siam, 2009.

\bibitem{cody1992fast}
M.~A. Cody.
\newblock The fast wavelet transform: Beyond {F}ourier transforms.
\newblock {\em Dr. Dobb's Journal}, 17(4):16--28, 1992.

\bibitem{cutrona1990synthetic}
L.~Cutrona.
\newblock Synthetic aperture radar.
\newblock {\em Radar handbook}, 2:2333--2346, 1990.

\bibitem{dainty2013laser}
J.~C. Dainty.
\newblock {\em Laser speckle and related phenomena}, volume~9.
\newblock Springer Science \& Business Media, 2013.

\bibitem{dungan2010civilian}
K.~E. Dungan, C.~Austin, J.~Nehrbass, and L.~C. Potter.
\newblock Civilian vehicle radar data domes.
\newblock {\em Algorithms for synthetic aperture radar Imagery XVII}, 7699(1),
  2010.

\bibitem{1166689}
J.~Fessler and B.~Sutton.
\newblock Nonuniform fast {F}ourier transforms using min-max interpolation.
\newblock {\em IEEE Transactions on Signal Processing}, 51(2):560--574, 2003.

\bibitem{gilman2015mathematical}
M.~Gilman and S.~Tsynkov.
\newblock A mathematical model for {SAR} imaging beyond the first {B}orn
  approximation.
\newblock {\em SIAM Journal on Imaging Sciences}, 8(1):186--225, 2015.

\bibitem{bregman}
T.~Goldstein and S.~Osher.
\newblock The split {B}regman method for {L}1-regularized problems.
\newblock {\em SIAM Journal on Imaging Sciences}, 2(2):323--343, 2009.

\bibitem{goodman2007speckle}
J.~W. Goodman.
\newblock {\em Speckle phenomena in optics: theory and applications}.
\newblock Roberts and Company Publishers, 2007.

\bibitem{SARMATLAB}
L.~A. Gorham and L.~J. Moore.
\newblock {SAR} image formation toolbox for {MATLAB}.
\newblock In {\em SPIE Defense, Security, and Sensing}, pages 769906--769906,
  2010.

\bibitem{Greengard}
L.~Greengard and J.~Y. Lee.
\newblock Accelerating the nonuniform fast {F}ourier transform.
\newblock {\em SIAM Review}, 46(3):443--454, 2004.

\bibitem{hestenes1969multiplier}
M.~R. Hestenes.
\newblock Multiplier and gradient methods.
\newblock {\em Journal of optimization theory and applications}, 4(5):303--320,
  1969.

\bibitem{Jakowatz1996}
C.~V. Jakowatz, D.~E. Wahl, P.~H. Eichel, D.~C. Ghiglia, and P.~A. Thompson.
\newblock {\em A Tomographic Foundation for Spotlight-Mode SAR Imaging}, pages
  33--103.
\newblock Springer US, Boston, MA, 1996.

\bibitem{kaipio2006statistical}
J.~Kaipio and E.~Somersalo.
\newblock {\em Statistical and computational inverse problems}, volume 160.
\newblock Springer Science \& Business Media, 2006.

\bibitem{li2010efficient}
C.~Li.
\newblock {\em An efficient algorithm for total variation regularization with
  applications to the single pixel camera and compressive sensing}.
\newblock PhD thesis, Rice University, 2010.

\bibitem{Li2013}
C.~Li, W.~Yin, H.~Jiang, and Y.~Zhang.
\newblock An efficient augmented {L}agrangian method with applications to total
  variation minimization.
\newblock {\em Comput. Optim. Appl.}, 56(3):507--530, 2013.

\bibitem{munson1984importance}
D.~Munson and J.~Sanz.
\newblock The importance of random phase for image reconstruction from
  frequency offset fourier data.
\newblock In {\em Acoustics, Speech, and Signal Processing, IEEE International
  Conference on ICASSP'84.}, volume~9, pages 158--161. IEEE, 1984.

\bibitem{munson1983tomographic}
D.~C. Munson, J.~D. O'Brien, and W.~K. Jenkins.
\newblock A tomographic formulation of spotlight-mode synthetic aperture radar.
\newblock {\em Proceedings of the IEEE}, 71(8):917--925, 1983.

\bibitem{munson1984image}
D.~C. Munson and J.~L. Sanz.
\newblock Image reconstruction from frequency-offset {F}ourier data.
\newblock {\em Proceedings of the IEEE}, 72(6):661--669, 1984.

\bibitem{munson86}
D.~C. Munson, Jr. and J.~L.~C. Sanz.
\newblock Phase-only image reconstruction from offset {F}ourier data.
\newblock {\em Optical Engineering}, 25(5):255655--255655--, 1986.

\bibitem{Nat-Cheney}
F.~Natterer, M.~Cheney, and B.~Borden.
\newblock Resolution for radar and {X}-ray tomography.
\newblock {\em Inverse Problems}, 19(6):S55, 2003.

\bibitem{nolan2002synthetic}
C.~J. Nolan and M.~Cheney.
\newblock Synthetic aperture inversion.
\newblock {\em Inverse Problems}, 18(1):221, 2002.

\bibitem{pinel2013electromagnetic}
N.~Pinel and C.~Boulier.
\newblock {\em Electromagnetic wave scattering from random rough surfaces:
  Asymptotic models}.
\newblock John Wiley \& Sons, 2013.

\bibitem{rigling2005taylor}
B.~D. Rigling and R.~L. Moses.
\newblock Taylor expansion of the differential range for monostatic {SAR}.
\newblock {\em IEEE Transactions on Aerospace and Electronic Systems},
  41(1):60--64, 2005.

\bibitem{toby-web}
T.~Sanders.
\newblock Matlab imaging algorithms: Image reconstruction, restoration, and
  alignment, with a focus in tomography.
\newblock \url{http://www.toby-sanders.com/software},
  \url{https://doi.org/10.13140/RG.2.2.33492.60801}.
\newblock Accessed: 2016-19-08.

\bibitem{sanders2017composite}
T.~Sanders, A.~Gelb, and R.~B. Platte.
\newblock Composite {SAR} imaging using sequential joint sparsity.
\newblock {\em Journal of Computational Physics}, 338:357--370, 2017.

\bibitem{sanders2018multiscale}
T.~Sanders and R.~B. Platte.
\newblock Multiscale higher-order {TV} operators for {L}1 regularization.
\newblock {\em Advanced structural and chemical imaging}, 4(1):12, 2018.

\bibitem{steidl2004equivalence}
G.~Steidl, J.~Weickert, T.~Brox, P.~Mr{\'a}zek, and M.~Welk.
\newblock On the equivalence of soft wavelet shrinkage, total variation
  diffusion, total variation regularization, and sides.
\newblock {\em SIAM Journal on Numerical Analysis}, 42(2):686--713, 2004.

\bibitem{thompson1996spotlight}
P.~Thompson, D.~E. Wahl, P.~H. Eichel, D.~C. Ghiglia, and C.~V. Jakowatz.
\newblock Spotlight-mode synthetic aperture radar: A signal processing
  approach.
\newblock 1996.

\bibitem{tibshirani2013lasso}
R.~J. Tibshirani et~al.
\newblock The lasso problem and uniqueness.
\newblock {\em Electronic Journal of Statistics}, 7:1456--1490, 2013.

\bibitem{wu2010augmented}
C.~Wu and X.~C. Tai.
\newblock Augmented {L}agrangian method, dual methods, and split {B}regman
  iteration for {ROF}, vectorial {TV}, and high order models.
\newblock {\em SIAM Journal on Imaging Sciences}, 3(3):300--339, 2010.

\end{thebibliography}
\end{document}